\numberwithin{equation}{section}
\newtheoremstyle{thmlemcorr}{10pt}{10pt}{\itshape}{}{\bfseries}{.}{10pt}{{\thmname{#1}\thmnumber{ #2}\thmnote{ (#3)}}}
\newtheoremstyle{thmlemcorr*}{10pt}{10pt}{\itshape}{}{\bfseries}{.}\newline{{\thmname{#1}\thmnumber{ #2}\thmnote{ (#3)}}}
\newtheoremstyle{remexample}{10pt}{10pt}{}{}{\bfseries}{.}{10pt}{{\thmname{#1}\thmnumber{ #2}\thmnote{ (#3)}}}
\newtheoremstyle{ass}{10pt}{10pt}{}{}{\bfseries}{.}{10pt}{{\thmname{#1}\thmnumber{ A#2}\thmnote{ (#3)}}}
\theoremstyle{thmlemcorr}
\newtheorem{theorem}{Theorem}
\numberwithin{theorem}{section}
\newtheorem{lemma}[theorem]{Lemma}
\newtheorem{proposition}[theorem]{Proposition}
\newtheorem{definition}[theorem]{Definition}
\theoremstyle{plain}
\newtheorem*{theorem*}{Theorem}
\newtheorem*{lemma*}{Lemma}
\theoremstyle{thmlemcorr*}
\newtheorem{corollary*}[theorem]{Corollary}
\newtheorem{proposition*}[theorem]{Proposition}
\newtheorem{problem*}[theorem]{Problem}
\newtheorem{conjecture*}[theorem]{Conjecture}
\newtheorem{definition*}[theorem]{Definition}
\theoremstyle{remexample}
\newtheorem{remark}[theorem]{Remark}
\theoremstyle{ass}
\newcommand{\Lrm}{\mathrm{L}}
\newcommand{\Rbb}{\mathbb{R}}
\DeclareMathOperator{\curl}{curl}
\DeclareMathOperator{\dist}{dist}
\DeclareMathOperator{\rank}{rank}
\DeclareMathOperator{\supp}{supp}
\DeclareMathOperator{\Div}{div}
\newcommand{\norm}[1]{\|#1\|}
\newcommand{\R}{\mathbb{R}}
\newcommand{\sbullet}{\begin{picture}(1,1)(-0.5,-2.5)\circle*{2}\end{picture}}
\newcommand{\frarg}{\,\sbullet\,}
\newcommand{\term}[1]{\textbf{#1}}
\def\XXint#1#2#3{{\setbox0=\hbox{$#1{#2#3}{\int}$} 
\vcenter{\hbox{$#2#3$}}\kern-.5\wd0}}
\renewcommand{\epsilon}{\varepsilon}
\renewcommand{\phi}{\varphi}
\begin{document}

%% TITLE MATTERS

\title[Failure of the Chain Rule for the Divergence of Bounded Vector Fields]{Failure of the Chain Rule for the Divergence\\ of Bounded Vector Fields}

\author{Gianluca Crippa}
\address{\textit{Gianluca Crippa:} Departement Mathematik und Informatik, Universit\"at
  Basel, Rheinsprung 21, CH-4051 Basel, Switzerland}
\email{gianluca.crippa@unibas.ch}

\author{Nikolay Gusev}
\address{\textit{Nikolay Gusev:} Dybenko st., 22/3, 94, 125475 Moscow, Russia}
\email{n.a.gusev@gmail.com}

\author{Stefano Spirito}
\address{\textit{Stefano Spirito:} GSSI -- Gran Sasso Science Institute, Viale Francesco Crispi 7, 67100 L'Aquila, Italy}
\email{stefano.spirito@gssi.infn.it}

\author{Emil Wiedemann}
\address{\textit{Emil Wiedemann:} Hausdorff Center for Mathematics and Mathematical Institute, Universit\"at Bonn, Endenicher Allee 60, 53115 Bonn, Germany}
\email{emil.wiedemann@hcm.uni-bonn.de}

%% PDF MATTERS

\hypersetup{
  pdfauthor = {Gianluca Crippa and Nikolay Gusev and Stefano Spirito and Emil Wiedemann},
  pdftitle = {...},
  pdfsubject = {MSC (2010): ? (primary); ?},
  pdfkeywords = {}
}

%% START OF CONTENT

\maketitle

%\hrule\vspace{1pt}
%\begin{center}
%\textbf{\large
%Version of \today} 
%\end{center}
%\hrule
%\vspace{10mm}

\begin{abstract}
We provide a vast class of counterexamples to the chain rule for the divergence of bounded vector fields in three space dimensions. Our convex integration approach allows us to produce renormalization defects of various kinds, which in a sense quantify the breakdown of the chain rule. For instance, we can construct defects which are absolutely continuous with respect to Lebesgue measure, or defects which are not even measures.   
\vspace{4pt}

\noindent\textsc{MSC (2010): 35F05 (primary); 35A02, 35Q35 } 

\noindent\textsc{Keywords:} Chain Rule, Convex Integration, Transport and Continuity Equations, Renormalization

\vspace{4pt}

%\noindent\textsc{Date:} \today{} 
\end{abstract}

%
%\setcounter{tocdepth}{1} 
%\tableofcontents

\section{Introduction}
In this paper we consider the classical problem of the chain rule for the divergence of a bounded vector field. Specifically, the problem can be stated in the following way:\\
\\
{\em Let $\Omega\subset\Rbb^d$ be a domain with Lipschitz boundary. Given a bounded vector field $v:\Omega\to\R^d$ tangent to the boundary and a bounded scalar function $\rho:\Omega\to\R$, one asks whether is possible to express the quantity $\Div(\beta(\rho)v)$, where $\beta$ is a smooth scalar function, only in terms of $\beta$, $\rho$ and the quantities $\lambda=\Div v$ and $\nu=\Div(\rho v)$.}\\
\\
Indeed, formally we should have that 
\begin{equation}\label{eq:1}
\Div(\beta(\rho)v)=(\beta(\rho)-\rho\beta'(\rho))\mu+\beta'(\rho)\lambda.
\end{equation}
However, the extension of \eqref{eq:1} to a nonsmooth setting is far from trivial. The chain rule problem is particularly important in view of its applications to the uniqueness and compactness of transport and continuity equations, whose analysis is nowadays a fundamental tool in the study of various equations arising in mathematical physics. Indeed continuity equations arise naturally for instance in compressible fluid mechanics in order to model the evolution of the density of a fluid. 

The chain rule problem for nonsmooth vector fields has been considered in several papers, in particular in \cite{ADM}. There, it is proved that if $v$ is of bounded variation and $\Div(\rho v)$ is a measure, then $\Div(\beta(\rho)v)$ is also a measure and in particular formula \eqref{eq:1} holds for the absolutely continuous parts of $\lambda$ and $\mu$. The singular part is partially characterized in the cited article.
 
In this paper, we prove that in the three dimensional case for vector fields which are merely bounded the formula \eqref{eq:1} is invalid in a very strong sense. Specifically, for a strongly convex function $\beta:(0,\infty)\to\Rbb$ and a given \term{renormalization defect} $f:\Omega\to\Rbb$ we construct a divergence-free vector field $v$ and a scalar function $\rho$ satisfying
\begin{equation}\label{continuity}
\begin{aligned}
\Div(\rho v)&=0\textrm{ in }\Omega\\
\Div(v)&=0\textrm{ in }\Omega\\
v\cdot n&=0\hspace{0.2cm}\text{on $\partial\Omega$}
\end{aligned}
\end{equation}
such that 
\begin{equation}\label{defect}
\Div(\beta(\rho)v)=f.
\end{equation}
More precisely, our main theorem reads as follows:
\begin{theorem}\label{mainthm}
Let $\Omega\subset\Rbb^3$ be a (not necessarily bounded) domain with Lipschitz boundary and $\beta:(0,\infty)\to\Rbb$ be strongly convex. Let moreover $f$ be a distribution such that the equation 
\begin{equation*}
\Div w=f 
\end{equation*}
admits a bounded continuous solution on $\Omega$. 
Then there exist $v\in \Lrm^{\infty}(\Omega;\Rbb^3)$ and $\rho\in \Lrm^{\infty}(\Omega;\Rbb)$ positive and bounded away from $0$ such that~\eqref{continuity} and~\eqref{defect} are satisfied in the sense of distributions. 
\end{theorem}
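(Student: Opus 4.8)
The plan is to rewrite the three equations in~\eqref{continuity}--\eqref{defect} as a single differential inclusion for an $\Rbb^{10}$-valued field and to solve that inclusion by convex integration in the \person{Tartar} framework (following \person{De~Lellis} and \person{Sz\'ekelyhidi}); the strong convexity of $\beta$ will be used exactly once, to open up room in the relaxed problem. Concretely, I fix a bounded continuous $w$ with $\Div w=f$ and constants $0<a<b$, $M>0$ (to be chosen large), and look for $z=(v,m,q,\rho)\colon\Omega\to\Rbb^3\times\Rbb^3\times\Rbb^3\times\Rbb$ satisfying the \emph{linear} constraints $\Div v=0$, $\Div m=0$, $\Div q=f$ (together with $v\cdot n=0$ on $\partial\Omega$, built into the admissible class below) and the \emph{pointwise} constraint $z(x)\in K$ for a.e.\ $x$, where
\[
K \;=\; \setn{\bigl(v,\ \rho v,\ \beta(\rho)v,\ \rho\bigr)}{\ \abs{v}\le M,\ a\le\rho\le b\ }.
\]
Any such $z$ provides $v\in\Lrm^\infty(\Omega;\Rbb^3)$ and $\rho\in\Lrm^\infty(\Omega)$ with values in $[a,b]\subset(0,\infty)$, and reading off the components yields exactly $\Div v=0$, $\Div(\rho v)=\Div m=0$ and $\Div(\beta(\rho)v)=\Div q=f$.

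The first task is the wave-cone analysis. The wave cone of $z\mapsto(\Div v,\Div m,\Div q)$ is $\Lambda=\setn{(\bar v,\bar m,\bar q,\bar\rho)}{\ \exists\,\xi\ne0:\ \bar v\perp\xi,\ \bar m\perp\xi,\ \bar q\perp\xi\ }$, i.e.\ the $\bar\rho$-axis times the codimension-one cone of triples with $\rank[\bar v\,|\,\bar m\,|\,\bar q]\le2$; plane waves with such amplitudes are exactly divergence free. This cone is large, and I would show that the $\Lambda$-convex hull $K^\Lambda$ is correspondingly fat near a convenient state. The elementary building block is this: for $v\perp\xi$, $\rho_0\in(a,b)$ and small $\delta>0$ the two points $\bigl(v,(\rho_0\pm\delta)v,\beta(\rho_0\pm\delta)v,\rho_0\pm\delta\bigr)$ of $K$ differ by a $\Lambda$-amplitude and their midpoint is
\[
\bigl(v,\ \rho_0v,\ (\beta(\rho_0)+\eta)\,v,\ \rho_0\bigr),\qquad
\eta=\eta(\delta):=\tfrac12\bigl(\beta(\rho_0+\delta)+\beta(\rho_0-\delta)\bigr)-\beta(\rho_0)\;\ge\;\tfrac{c_0}{2}\,\delta^2,
\]
with $c_0:=\inf_{[a,b]}\beta''>0$: this is the one place where \emph{strong} (rather than plain) convexity is used. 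Laminating such a ``level-$\eta$'' point based at a vector $v_1$ against the genuine point $\bigl(v_2,\rho_0v_2,\beta(\rho_0)v_2,\rho_0\bigr)\in K$ with $v_2=2v_0-v_1$ (whose difference again lies in $\Lambda$), one reaches, at the pinned values $v$-slot$\,=v_0$, $m$-slot$\,=\rho_0v_0$, $\rho$-slot$\,=\rho_0$, the $q$-slot $\beta(\rho_0)v_0+\tfrac{\eta}{2}v_1$; letting $v_1$ range this fills a ball in $\Rbb^3$ of radius $\gtrsim\eta(\delta)\,(M-\abs{v_0})$, and perturbing the remaining slots by analogous laminations shows that $K^\Lambda$ has nonempty interior containing $\bigl(v_0,\rho_0v_0,q_*,\rho_0\bigr)$ for every $q_*$ in that ball. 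Taking $M$ large (the radius grows linearly in $M$ while $\norm{w}_{\Lrm^\infty}$ is fixed) and taking $v_0$ a fixed nonzero bounded divergence-free field tangent to $\partial\Omega$ (a constant field if $\Omega=\Rbb^3$), the constant data $(v_0,\rho_0v_0,w(x),\rho_0)$ then lies in $\interior K^\Lambda$ for a.e.\ $x$, while $q\equiv w$ has $\Div q=f$ and $v\equiv v_0$, $m\equiv\rho_0v_0$, $\rho\equiv\rho_0$ satisfy the linear constraints and the boundary condition: this furnishes a \term{subsolution}.

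With the subsolution in hand I would run the standard convex-integration/Baire scheme. Let $X$ be the closure, in the weak-$*$ topology of $\Lrm^\infty(\Omega;\Rbb^{10})$ (metrizable on bounded sets since $\Lrm^1(\Omega)$ is separable, also for unbounded $\Omega$), of the smooth fields satisfying the linear constraints, the weak tangency $\int_\Omega v\cdot\nabla\phi=0$ for all $\phi\in C^\infty(\cl{\Omega})$, and $z(x)\in\interior K^\Lambda$; this is a nonempty complete metric space. The engine is the perturbation property: if $z\in X$ has $\int_\Omega\dist(z(x),K)\dd x>0$, then adding to $z$, on a Vitali family of small cubes compactly contained in $\Omega$, highly oscillatory plane waves with amplitudes in $\Lambda$ pointing from $z(x)$ towards $K$ produces some $z'\in X$ as close to $z$ in the weak-$*$ metric as desired but with $\int_\Omega\dist(z'(x),K)^2\dd x$ increased by an amount bounded below in terms of $\int_\Omega\dist(z(x),K)\dd x$ alone; the three divergence constraints survive exactly (amplitudes orthogonal to frequencies, correctors supported near the cut-offs), and the compact support of the perturbations preserves the boundary condition. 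A Baire category argument — or, equivalently, an explicit iteration driving the defect geometrically to zero — then produces $z\in X$ with $z(x)\in K$ for a.e.\ $x$, and its components $v,\rho$ are the desired fields.

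The step I expect to be the main obstacle is establishing the fatness of $K^\Lambda$ near a manageable reference state (the second paragraph): this is precisely where strong convexity enters, through the quantitative \person{Jensen} gap $\eta(\delta)\ge\frac{c_0}{2}\delta^2$, and it demands careful bookkeeping of the iterated laminations so as to keep the $v$-, $m$- and $\rho$-slots pinned while the $q$-slot spreads out into a genuine neighbourhood. The remaining points — constructing oscillatory correctors that respect all three divergence constraints identically, arranging $v_0\ne0$ almost everywhere, absorbing the continuous bounded $x$-dependence carried by $w$, and exhausting an unbounded $\Omega$ by compacta — are technical but routine within this framework.
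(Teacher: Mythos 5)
Your proposal is a valid convex-integration scheme for the theorem, but it takes a genuinely different route from the paper's. The differences worth flagging are the following. First, the \emph{state space}: you carry $\rho$ as an explicit tenth component with no differential constraint, which lets your constraint set $K$ drop the lower bound on $|v|$; the paper instead works with $3\times 3$ matrices $U=(m;v;w)$, enforces $\tfrac1C\le|v|\le C$ in $K_C$, and recovers $\rho=|m|/|v|$ at the end, so it also produces a pointwise nonvanishing $v$, which is slightly sharper. Second, the \emph{convergence strategy}: you run the De~Lellis--Sz\'ekelyhidi Baire-category scheme on a complete metric space of subsolutions, whereas the paper explicitly avoids Baire category in favour of a Young-measure argument: it constructs a sequence $U_n$ of exact solutions to the linear system whose pointwise rank-2 laminates $\nu^n_x$ live in an increasing nest $K_{C_n}$ with $C_n\nearrow C$, and the $\Lrm^1$-telescoping estimate~\eqref{cauchy} gives strong (hence a.e.) convergence without any category argument. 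Third — and this is the crux — the geometric content. You identify correctly that the whole matter reduces to showing that a subsolution lands in the interior of $K^\Lambda$, and you locate precisely where strong convexity buys room (the Jensen gap $\eta(\delta)\gtrsim\delta^2$). In the paper this content is distributed over Lemma~\ref{geom1} (exact decomposition of $(0,0,w)$ along rank-2 lines into $K_C$, again via the strong-convexity surplus, cf.~\eqref{convexestimate}) and Lemma~\ref{geom2} (a quantitative \enquote{near $K^{2lc}_C$ implies decomposable into $K_{C+\epsilon}$} statement with the modulus $h$ that drives the $\Lrm^1$ estimate). Your two-step lamination is essentially the paper's proof of Lemma~\ref{geom1} in the $\rho$-augmented coordinates, but your claim that \enquote{perturbing the remaining slots shows $K^\Lambda$ has nonempty interior} is exactly the part that needs the multi-step bookkeeping of Lemma~\ref{geom2} (Steps 1--4 there decouple the $w$-, $m$- and $v$-slots one at a time), and it is the main thing you would still have to supply.

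Two small inaccuracies: you invoke $c_0=\inf_{[a,b]}\beta''>0$, but the paper's hypothesis is the Jensen-gap inequality~\eqref{stronglyconvex} with a constant $\kappa$, which does not presuppose $\beta\in C^2$; you should use $\kappa$ directly, exactly as Proposition~\ref{strong} is used in the paper. Also, insisting that $v_0$ be a \emph{nonvanishing} divergence-free field tangent to $\partial\Omega$ is both unnecessary (your own lamination estimate survives $v_0=0$, and $v_0\equiv0$ is precisely the paper's starting point in Step~5 of Section~\ref{proof}) and potentially hard to arrange on a general Lipschitz domain; better to drop it. Finally, the perturbation property as you phrase it (\enquote{$\int\dist(z',K)^2$ increased}) has the inequality pointing the wrong way; the standard scheme increases a weak-* upper semicontinuous energy such as $\int|z|^2$, with the increment controlled below by $\int\dist(z,K)^2$, so that Baire-residual continuity points of the energy are forced into $K$.
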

\begin{remark}
We want to point out that the requirement on $f$ is satisfied for instance when $\Omega$ is bounded and $f\in L^{p}(\Omega)$ with $p>3$. However, there exist also distributions $f$ which are not measures for which the divergence equation admits a bounded continuous solution. In particular, our result shows that if we drop the $BV$ regularity assumption on the vector field $v$, then the quantity $\Div(\beta(\rho) v)$ can fail to be a measure, even though $\lambda$ and $\mu$ vanish.
\end{remark}
\begin{remark}
The theorem is still valid in dimensions higher than $3$, with essentially the same proof.
\end{remark}
It is worth pointing out that in Theorem \ref{mainthm} it is crucial that $d\geq 3$. Indeed, for bounded two dimensional vector fields and strictly positive density $\rho$ bounded away from $0$, formula \eqref{eq:1} has been established in \cite{BG}. Our result can thus be interpreted as complementary to the one in \cite{BG}. 

As mentioned above, the chain rule is strongly connected with the uniqueness problem for transport and continuity equations. Several counterexamples to the uniqueness of continuity equations in a nonsmooth setting are known, see \cite{A, CLR2003, D, ABC1, ABC2} and also \cite{CGSW}, where a similar approach based on convex integration is used. Some of these counterexamples, in particular \cite{D}, can be modified in order to obtain counterexamples to the chain rule with vector fields more regular than $L^{\infty}$. However, these examples rely on explicit constructions and yield only very specific renormalization defects. In particular, diffuse defects and defects which are not measures have not been known previously.
 
We close this introduction with a short comment on our method. We use a convex integration scheme where the perturbations are obtained from laminates, thus taking an approach reminiscent of~\cite{MS, AFS, CS, KRW2, KRW1}. Our convergence strategy relies on Young measures (cf.~\cite{KRW2, KRW1}) and avoids the Baire category method, thus giving a somewhat explicit construction. The core of our proof is a study (in Section~\ref{geom} below) of the geometry of the nonlinear constraint sets $K_C$ (see~\eqref{nonlinear}) in matrix space. It is at this point that the specific properties of our problem enter. Note that in dimension 2 our rank-2 condition would turn into a rank-1 condition, which would be too rigid for the geometric constructions of Section~\ref{geom}.  

\subsection*{Acknowledgments}
This research has been partially supported by the SNSF grants 140232 and 156112. This work was started while the third author was a PostDoc at the Departement Mathematik und Informatik of the Universit\"at Basel. He would like to thank the department for the hospitality and the support. The second author was partially supported by the Russian Foundation for Basic Research, project no.~13-01-12460. The authors are grateful to S.~Bianchini, C.~De Lellis, and L.~Sz\'{e}kelyhidi for the fruitful discussions about the topic of the paper.

\section{Preliminaries}\label{prelim}
A function $\beta:(0,\infty)\to\Rbb$ is called \term{strongly convex} if there exists $\kappa>0$ such that, for all $x_1,x_2>0$ and $0\leq\lambda\leq1$,
\begin{equation}\label{stronglyconvex}
\beta(\lambda x_1+(1-\lambda)x_2)\leq\lambda\beta(x_1)+(1-\lambda)\beta(x_2)-\kappa\lambda(1-\lambda)|x_1-x_2|^2.
\end{equation}
For instance, the map $x\mapsto x^2$ is strongly convex with $\kappa=1$. We remark in passing that for the purposes of this paper, we could replace $|x_1-x_2|^2$ by $\phi(|x_1-x_2|)$, where $\phi:[0,\infty)\to[0,\infty)$ is an increasing function with $\phi(0)=0$ and $\lim_{t\to\infty}\phi(t)=\infty$.
\begin{proposition}\label{strong}
If $\beta:(0,\infty)$ is strongly convex for a $\kappa>0$, and if $\lambda<0$ and $x_1,x_2>0$ are such that $\lambda x_1+(1-\lambda)x_2>0$, then
\begin{equation*}
\lambda\beta(x_1)+(1-\lambda)\beta(x_2)\leq\beta(\lambda x_1+(1-\lambda)x_2)+\kappa\lambda(1-\lambda)|x_1-x_2|^2.
\end{equation*}
\end{proposition}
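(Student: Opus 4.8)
The plan is to reduce this ``negative weight'' statement to the ordinary strong convexity inequality \eqref{stronglyconvex} by a change of viewpoint. The point is that, since $\lambda<0$, the number $x_2$ lies strictly between $x_1$ and the auxiliary value $y:=\lambda x_1+(1-\lambda)x_2$ (which is positive by hypothesis). So I would write $x_2$ as a genuine convex combination of $x_1$ and $y$, apply \eqref{stronglyconvex} at that combination, and rearrange.

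Concretely: set $y:=\lambda x_1+(1-\lambda)x_2>0$ and solve for $x_2$, obtaining $x_2=\mu x_1+(1-\mu)y$ with $\mu:=\tfrac{-\lambda}{1-\lambda}$ and $1-\mu=\tfrac{1}{1-\lambda}$. Since $\lambda<0$ we have $1-\lambda>1$, hence $\mu\in(0,1)$, so this is a bona fide convex combination of the two positive numbers $x_1$ and $y$, and \eqref{stronglyconvex} applies to it. Before invoking it I would record the two elementary identities $x_1-y=(1-\lambda)(x_1-x_2)$, whence $|x_1-y|^2=(1-\lambda)^2|x_1-x_2|^2$ (using $1-\lambda>0$), and $\mu(1-\mu)=\tfrac{-\lambda}{(1-\lambda)^2}$, so that $\mu(1-\mu)|x_1-y|^2=-\lambda|x_1-x_2|^2$.

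Now \eqref{stronglyconvex} applied to $x_2=\mu x_1+(1-\mu)y$ gives
\[
\beta(x_2)\leq \mu\beta(x_1)+(1-\mu)\beta(y)-\kappa\mu(1-\mu)|x_1-y|^2
=\frac{-\lambda}{1-\lambda}\beta(x_1)+\frac{1}{1-\lambda}\beta(y)+\kappa\lambda|x_1-x_2|^2,
\]
using the two identities just noted. Multiplying through by $1-\lambda>0$ and moving $-\lambda\beta(x_1)$ to the left-hand side yields
\[
\lambda\beta(x_1)+(1-\lambda)\beta(x_2)\leq \beta(y)+\kappa\lambda(1-\lambda)|x_1-x_2|^2,
\]
and since $y=\lambda x_1+(1-\lambda)x_2$ this is precisely the claimed inequality.

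There is essentially no serious obstacle here; the argument is purely algebraic. The only points that require a little care are verifying that $\mu\in(0,1)$ so that \eqref{stronglyconvex} legitimately applies to a true convex combination, and correctly tracking the sign of $1-\lambda$ (which is positive, being $>1$) through the final multiplication so that the inequality is not reversed. I would expect the write-up to be only a few lines once these identities are in place.
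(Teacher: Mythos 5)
Your proof is correct and is essentially the same as the paper's: both express $x_2$ as a genuine convex combination of $x_1$ and $y=\lambda x_1+(1-\lambda)x_2$, apply \eqref{stronglyconvex} at that combination, and rearrange; you merely write the combination as $\mu x_1+(1-\mu)y$ with $\mu=-\lambda/(1-\lambda)$, while the paper writes it in the opposite order with weight $1/(1-\lambda)=1-\mu$, which is the same substitution up to swapping the two points.
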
 
\begin{proof}
This follows by replacing $x_1$ by $\lambda x_1+(1-\lambda)x_2$, $x_2$ by $x_1$, and $\lambda$ by $1/(1-\lambda)$ in~\eqref{stronglyconvex}.
\end{proof}
\begin{remark}\label{betanormal}
An immediate remark is that for the proof of Theorem \ref{mainthm} we may assume, without loss of generality, that $\beta(1)=1$. Indeed, by~\eqref{continuity}, equation~\eqref{defect} remains unaffected by adding a constant to $\beta$. We will make this assumption throughout the rest of the paper.
\end{remark}

We recall the space of solenoidal vectorfields on $\Omega$ (cf. Chapter III of~\cite{Gald94MTNS}),  
\begin{equation*}
H(\Omega)=\left\{v\in L^2(\Omega;\Rbb^3):\int_{\Omega}v\cdot\nabla p dx=0\hspace{0.2cm}\text{for every $p\in W^{1,2}(\Omega)$ }\right\}.
\end{equation*}
It is known that if $(v_n)\subset C^1(\bar{\Omega};\Rbb^3)$ is a sequence of divergence-free vector fields such that $v(x)=0$ on $\partial\Omega$, and if the sequence converges weakly in $\Lrm^2(\Omega)$ to a field $v$, then $v\in H(\Omega)$. 

The problem~\eqref{continuity},~\eqref{defect} can then be formulated in the sense of distributions in the following way: Find $v\in H(\Omega)$ such that for every $\psi\in C_c^\infty(\Omega)$, we have
\begin{equation*}
\int_{\Omega}\rho v\cdot\nabla\psi dx=0
\end{equation*}
and
\begin{equation*}
\int_{\Omega}\beta(\rho) v\cdot\nabla\psi dx+\int_{\Omega}f\psi dx=0
\end{equation*}
(if $f$ is merely a distribution, the second integral is of course to be understood as the action of $f$ on $\psi$).

In our iteration scheme, the perturbations will be chosen as members of \term{recovery sequences} of \term{rank-2 laminates}. These are defined as follows (cf.~\cite{Dac1985} and also Definition~9.1 in~\cite{Pedr97PMVP} for the rank-1 analogue):
\begin{definition}\label{rk2laminate}
\begin{itemize}
\item[a)] Suppose $\lambda_i>0$ for $i=1,\ldots,n$, $\sum_{i=1}^{n}\lambda_i=1$, and $U_i\in\Rbb^{3\times3}$ for $i=1,\ldots,n$. The family of pairs $(\lambda_i,U_i)_{i=1}^n$ satisfies the (inductively defined) \term{$H_n$-condition} if
\begin{itemize}
\item[i)] $\rank(U_2-U_1)\leq2$ in the case $n=2$;
\item[ii)] after a relabeling of indices, if necessary, we have $\rank(U_2-U_1)\leq2$ and the family $(\tau_i,V_i)_{i=1}^{n-1}$ satisfies the $H_{n-1}$-condition, where
\begin{equation*}
\begin{aligned}
\tau_1=\lambda_1+\lambda_2, \hspace{0.2cm}\tau_i=\lambda_{i+1}\hspace{0.2cm}\text{for $i=2,\ldots,n-1$}
\end{aligned}
\end{equation*}
and
\begin{equation*}
\begin{aligned}
V_1=\frac{\lambda_1}{\tau_1}U_1+\frac{\lambda_2}{\tau_1}U_2, \hspace{0.2cm}V_i=U_{i+1}\hspace{0.2cm}\text{for $i=2,\ldots,n-1$}
\end{aligned}
\end{equation*}
in the case $n>2$.
\end{itemize}
Moreover we adopt the convention that every pair of the form $(1,U)$ satisfies the $H_1$-condition.

\item[b)] A probability measure $\nu$ on $\Rbb^{3\times3}$ is said to be a \term{rank-2 laminate} of order $n$ if it has the form
\begin{equation*}
\nu=\sum_{i=1}^n\lambda_i\delta_{U_i}
\end{equation*}
for a family $(\lambda_i,U_i)_{i=1}^n$ which satisfies the $H_n$-condition.
\end{itemize}
\end{definition}
For the \term{expectation} of a probability measure, we write
\begin{equation*}
\bar{\nu}:=\int_{\Rbb^{3\times3}}Vd\nu(V).
\end{equation*}
A \term{parametrized probability measure} or \term{Young measure} is a map $\Omega\ni x\mapsto\nu_x$, where $\nu_x$ is a probability measure on $\Rbb^{3\times3}$. It is said to be \term{weakly* measurable} if the map
\begin{equation*}
x\mapsto\int_{\Rbb^{3\times3}}h(z)d\nu_x(z)
\end{equation*}
is measurable in the usual sense for every bounded continuous test function $h:\Rbb^{3\times 3}\to\Rbb$.

We also need to define the \term{rank-2 lamination convex hull} of a set $K\subset\Rbb^{3\times3}$. A similar notion for rank-1 laminates is presented e.g. in Section 4.4 of~\cite{Mull99VMMP}.
\begin{definition}\label{2lc}
Let $K\subset\Rbb^{3\times3}$. A matrix $U\in\Rbb^{3\times3}$ is contained in the \term{rank-2 lamination convex hull} of $K$, denoted $K^{2lc}$, if and only if 
\begin{equation*}
U=\sum_{i=1}^n\lambda_iU_i
\end{equation*} 
for a family $(\lambda_i,U_i)_{i=1}^n$ that satisfies the $H_n$-condition and such that $U_i\in K$ for every $i=1,\ldots,n$.
\end{definition}

\section{Proof of Theorem~\ref{mainthm}}\label{proof}
\textbf{Step 1: Reformulation of the problem.} First we rewrite equations~\eqref{continuity} and~\eqref{defect} as the conjunction of an underdetermined \emph{linear} differential system and a nonlinear \emph{pointwise} constraint, thus adopting a viewpoint similar to the one in~\cite{DLSz2008DI}.

Let us therefore consider the linear system of equations
\begin{equation}\label{linear}
\begin{aligned}
\Div(m)&=0\\
\Div(v)&=0\\
\Div(w)&=f
\end{aligned}
\end{equation} 
in the unknowns $(m,v,w):\Omega\to\Rbb^{3\times3}$. We also define the constraint set, with given constant $C>1$, as
\begin{equation}\label{nonlinear}
\begin{aligned}
K_{C}&:=\left\{(m,v,w)\in\Rbb^{3\times3}: \frac{1}{C}\leq|v|\leq C \right.\\
&\left.\text{ and there is $\frac{1}{C}\leq\rho\leq C$ such that } m=\rho v, w=\beta(\rho)v\right\}.
\end{aligned}
\end{equation}
Thus $K_{C}$ is a non-empty compact subset of $\Rbb^{3\times3}$. Then, clearly, if a triplet of measurable maps $(m,v,w)$ satisfies~\eqref{linear} in the sense of distributions, if $(m,v,w)(x)\in K_{C}$ for almost every $x\in\Omega$, and if $v\in H(\Omega)$, then $v$ and $\rho(x):=|m(x)|/|v(x)|$ will be a solution of~\eqref{continuity} and~\eqref{defect} as in Theorem~\ref{mainthm}.\\

\textbf{Step 2: Recovery of rank-2 laminates.} %Since~\eqref{linear} is a non-homogeneous linear constraint, it will still hold if we add to $(m_0,v_0,w_0)$ a perturbation $(m,v,w)$ which satisfies the homogeneous equations
 %\begin{equation}\label{homo}
%\begin{aligned}
%\Div(m)&=0\\
%\Div(v)&=0\\
%\Div(w)&=0.
%\end{aligned}
%\end{equation} 
It is convenient to identify a triplet $(m,v,w)$ with the matrix $U$ whose rows are given by $m$, $v$ and $w$. Equations~\eqref{linear} then mean that 
\begin{equation}\label{matrix}
\Div(U)=(0,0,f)^T,
\end{equation}
where the divergence is taken row-wise as usual.

An important building block for our construction is the fact that rank-2 laminates can be approximated in an appropriate sense by solutions of~\eqref{matrix}. This is the content of the following lemma, whose proof is largely standard (c.f. e.g. Proposition~9.2 in~\cite{Pedr97PMVP} or Proposition~19 in~\cite{SzeWie12YMGI} for similar constructions). We give the full proof for the reader's convenience, but postpone it to Section~\ref{recovery}.
\begin{lemma}\label{approx}
Let $K\subset\Rbb^{3\times 3}$ be compact and $(\nu_x)_{x\in\Omega}$ be a weakly*-measurable family of probability measures such that
\begin{itemize}
\item[a)] the measure $\nu_x$ is a rank-2 laminate of finite order for almost every $x\in\Omega$,
\item[b)] $\supp\nu_x\subset K$ for almost every $x$.
\end{itemize}
Assume further that $\psi\in C(\Rbb^{3\times3};\Rbb)$ is a non-negative function that vanishes on $K$. Then the expectation $\bar{\nu}_x$ is well-defined for almost every $x\in\Omega$ and for every $\epsilon>0$ there exists a matrix-valued function $U$ such that
\begin{itemize}
\item[i)] $\Div U=\Div\bar{\nu}$ \hspace{0.2cm} in the sense of distributions,
\item[ii)] \begin{equation*}
\int_{\Omega}\psi(U(x))dx<\epsilon,
\end{equation*}
\item[iii)] \begin{equation*}
\norm{\dist(U(x),K^{2lc})}_{L^{\infty}(\Omega)}<\epsilon,
\end{equation*}
\item[iv)]
\begin{equation}\label{expectationclose}
\int_{\Omega}\left|U(x)-\bar{\nu}_x\right|dx<\int_{\Omega}\int_{\Rbb^{3\times3}}\left|V-\bar{\nu}_x\right|d\nu_x(V)dx+\epsilon.
\end{equation}
\end{itemize}
Moreover, if $\bar{\nu}\in C(\bar{\Omega})$, then $U$ can be chosen to satisfy $U\in C(\bar{\Omega})$ and
\begin{equation*}
U(x)=\bar{\nu}_x\hspace{0.2cm}\text{on $\partial\Omega$.}
\end{equation*}
\end{lemma}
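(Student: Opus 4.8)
The plan is to establish Lemma~\ref{approx} by the classical convex-integration-via-laminates strategy, proceeding by induction on the order of the laminate $\nu_x$ and reducing the general ``Young measure'' statement to the single rank-2 splitting. First I would treat the base case: if $\nu_x=\delta_{\bar\nu_x}$ almost everywhere, then $U:=\bar\nu$ already solves all of (i)--(iv) (the left-hand side of \eqref{expectationclose} and the integrand in $\psi$ vanish, noting $\bar\nu_x\in K$ whenever $\nu_x$ is a Dirac supported in $K$, so $\psi(\bar\nu_x)=0$), and the boundary condition is automatic. For the inductive step, the key is a local construction: given a splitting $U_1,U_2$ with $\rank(U_2-U_1)\le 2$, one can write $U_2-U_1 = a\otimes \xi + b\otimes\eta$ for suitable vectors, which means there is a $2$-periodic, mean-zero, bounded matrix field $W$ with $\Div W=0$ whose values oscillate (in a Lipschitz-percentage sense) between the two ``wells'' $U_1-\bar U$ and $U_2-\bar U$ on a partition of the domain into two sets of the prescribed volume fractions $\lambda_1,\lambda_2$; this is the standard rank-$r$ plane-wave construction (one plane wave per rank direction, localized by a cutoff $\phi\in C_c^\infty$). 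Because $\Div(a\otimes\xi)$ in the relevant distributional pairing involves only $\xi$-derivatives, a profile depending on $x\cdot\xi$ is divergence-free; the rank-2 condition is exactly what is needed to have two independent such directions, reflecting the remark in the introduction that in dimension $2$ one would only get rank-1.

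Next I would assemble these local pieces. The induction hypothesis applied to the $(n-1)$-order laminate $(\tau_i,V_i)$ (which is a rank-$(n-1)$ laminate by Definition~\ref{rk2laminate}) produces a field $\tilde U$ with $\Div\tilde U=\Div\bar\nu$, with $\tilde U$ close to $\bar\nu$ in $L^1$, with $\int\psi(\tilde U)$ small, and with $\tilde U$ close to $K^{2lc}$; in particular $\tilde U(x)$ lies near $V_1=\tfrac{\lambda_1}{\tau_1}U_1+\tfrac{\lambda_2}{\tau_1}U_2$ on a large portion of its ``$V_1$-region.'' On that region I then add, cut off by a smooth partition, rescaled copies of the $W$-profile above that split $V_1$ into $U_1$ (fraction $\lambda_1/\tau_1$) and $U_2$ (fraction $\lambda_2/\tau_1$); choosing the oscillation wavelength small and the cutoffs carefully, the perturbation is divergence-free so (i) is preserved, it pushes $U$ into an arbitrarily small neighborhood of the original wells on all but a small exceptional set (giving (ii) via continuity of $\psi$ and boundedness, and (iii) since the split values lie in $K^{2lc}$), and the $L^1$ bookkeeping gives (iv): the new $L^1$ distance to $\bar\nu$ is, up to $\epsilon$, the old one plus $\tau_1\cdot(\lambda_1/\tau_1)(\lambda_2/\tau_1)\cdot|U_1-U_2|\cdot(\text{stuff})$, which is precisely the amount $\int\int|V-\bar\nu_x|\,d\nu_x$ accounts for when one unfolds the recursive definition of the laminate. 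The $x$-dependence is handled by the usual measurable-selection / exhaustion argument: approximate $(\nu_x)$ by a Young measure that is piecewise constant (in $x$) in the rank-2-laminate parameters, run the finite construction on each piece, and pass to the limit; weak*-measurability of $x\mapsto\nu_x$ and compactness of $K$ make all the integrals well-behaved and $\bar\nu_x$ well-defined a.e.

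Finally, for the boundary statement: when $\bar\nu\in C(\bar\Omega)$ one runs the same scheme but with each perturbation compactly supported in $\Omega$ (the cutoffs $\phi$ are chosen with support interior to each subdomain of the partition), and one keeps the starting approximation equal to $\bar\nu$ near $\partial\Omega$; a mollification/diagonal argument then yields $U\in C(\bar\Omega)$ with $U=\bar\nu$ on $\partial\Omega$, at the cost of enlarging the exceptional set slightly, which is harmless for (i)--(iv).

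The main obstacle I expect is the rank-2 plane-wave construction together with the precise $L^1$ estimate \eqref{expectationclose}: one must produce, for a rank-$2$ difference $U_2-U_1$, an explicitly controlled divergence-free oscillating field that realizes the two volume fractions up to small error, that stays in a prescribed neighborhood of the segment $[U_1,U_2]$ (so as not to spoil (iii)), and whose $L^1$ deviation from its mean is sharp enough that, summed over the inductive steps, it does not exceed $\int_\Omega\int |V-\bar\nu_x|\,d\nu_x\,dx+\epsilon$ rather than some larger multiple of it. Getting the constants right in the recursion — so that the ``cost'' telescopes exactly to the laminate's total transport cost — is the delicate accounting; everything else (cutoffs, measurable selection, the continuity-up-to-the-boundary refinement) is routine once that building block is in place.
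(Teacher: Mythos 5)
Your overall strategy (induction on the laminate order, plane-wave building blocks localized by cutoffs, piecewise-homogeneous approximation in $x$, and the shift trick to reduce to zero expectation) is the same as the paper's; but the mechanism you describe for the divergence-free oscillation is wrong, and if implemented as written it would fail. You write that because $\Div(a\otimes\xi)$ ``involves only $\xi$-derivatives, a profile depending on $x\cdot\xi$ is divergence-free.'' In fact $\Div\bigl((a\otimes\xi)\,h(x\cdot\zeta)\bigr)=a\,(\xi\cdot\zeta)\,h'(x\cdot\zeta)$, so a profile in direction $\xi$ gives $a\,|\xi|^2h'\neq 0$; one needs the oscillation direction $\zeta$ to be \emph{orthogonal} to $\xi$. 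For the divergence constraint the relevant wave cone is $\{\det\bar U=0\}$ (Proposition~\ref{wavecone}): if $\rank(U_2-U_1)\le 2$ in $\Rbb^{3\times3}$ then $\ker(U_2-U_1)\neq\{0\}$, and for any $\zeta$ in this kernel the \emph{single} plane wave $(U_2-U_1)\,h(x\cdot\zeta)$ is exactly divergence-free. The ``one plane wave per rank direction'' picture, and the claim that the rank-2 hypothesis supplies ``two independent such directions,'' has it backwards: rank $\le2$ is precisely what guarantees at least \emph{one} kernel direction in three space dimensions (while in $\Rbb^2$ one would need rank $\le 1$), and no superposition of waves is required.

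Two further points where your proposal diverges from what actually closes the argument. First, the localization: multiplying the oscillating field by a cutoff $\phi$ destroys the divergence-free property; the paper instead writes $U_n=\curl\Phi_n$, cuts off the \emph{potential} $\Phi_n$, and controls the commutator $\norm{\eta_\delta U_n-\curl(\eta_\delta\Phi_n)}_{L^\infty}\lesssim \delta^{-1}\norm{\Phi_n}_{L^\infty}$, which is made small by letting $\norm{\Phi_n}_\infty\to 0$ faster than $\delta\to 0$. You would need some such device. Second, the ``delicate telescoping'' you anticipate for~\eqref{expectationclose} is not how the paper gets (ii) and (iv): it observes that the cut-off potentials produce a sequence that is uniformly bounded and \emph{generates $\nu$ as a Young measure}, so both $\int\psi(U)\to\int\!\int\psi\,d\nu=0$ and $\int|U-\bar\nu|\to\int\!\int|V-\bar\nu|\,d\nu$ follow at once, with no bookkeeping of constants across induction levels. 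Your recursive direction (splitting $V_1$ back into $U_1,U_2$ rather than peeling off $U_{n+1}$ as the paper does) is a harmless variant, but the two issues above are substantive gaps.
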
 

\textbf{Step 3: Initial step of the iteration.} Our iteration process will start with a triplet of the form $(0,0,w)$, where $\Div(w)=f$. Since our construction is in a sense local, we can ``freeze'' $x$ and first consider a constant vector $w\in\Rbb^3$. The goal is to decompose the matrix $U$ corresponding to $(0,0,w)$ along rank-2 lines as a sum of matrices in $K_{C}$ (of course $K_{C}$ can be viewed as a subset of the space of $3\times3$-matrices). More precisely, we have
\begin{lemma}\label{geom1}
Let $U\in\Rbb^{3\times3}$ such that $U^Te_1=U^Te_2=0$ and $|U^Te_3|\geq1$. Then there exists a rank-2 laminate $\nu=\sum_{i=1}^n\lambda_i\delta_{U_i}$ such that
\begin{equation*}
U=\sum_{i=1}^n\lambda_iU_i
\end{equation*}
and a number $C>1$ such that
\begin{equation*}
\supp\nu\subset K_{C}.
\end{equation*}
Moreover there exists a constant $C_{\beta}$ depending only on $\beta$ such that $C\leq\max\{C_{\beta},4|U^Te_3|\}$.
\end{lemma}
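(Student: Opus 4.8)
Here is how I would attack Lemma~\ref{geom1}.

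The plan is to realise the given matrix $U$ — which corresponds to the triple $(m,v,w)=(0,0,w)$ with $W:=|U^Te_3|=|w|\ge 1$ — as the barycenter of an explicit rank-$2$ laminate of order three supported in $K_C$. Set $a:=w/W$, a unit vector. Every element of $K_C$ has rows of the form $(\rho\xi,\xi,\beta(\rho)\xi)$ with $1/C\le|\xi|\le C$ and $1/C\le\rho\le C$, and the key choice is to take all three matrices of the laminate with middle row $\pm Wa$ (so $|\xi|=W$): the common modulus $W$ absorbs the size of $w$ and lets us keep the densities in a $\beta$-dependent compact interval. Explicitly, with weights $(\tfrac14,\tfrac14,\tfrac12)$, I take $A_1,A_2$ to have middle row $Wa$ and densities $\rho_*-t$, $\rho_*+t$, and $A_3$ to have middle row $-Wa$ and density $\rho_*$, for parameters $\rho_*>0$ and $0<t<\rho_*$ to be fixed.

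With this ansatz all rank conditions in Definition~\ref{rk2laminate} are automatic. Indeed, two matrices with the same middle row differ by a matrix all of whose rows are parallel to $a$ (one of them being the zero row), so $\rank(A_2-A_1)\le 1$; and the pair $\bigl(\tfrac12,\tfrac12(A_1+A_2)\bigr),\bigl(\tfrac12,A_3\bigr)$ arising in the $H_3$-condition satisfies $H_2$ because $A_3-\tfrac12(A_1+A_2)$ again has all rows parallel to $a$. Hence the $H_3$-condition holds with the stated labeling, and it only remains to match the barycenter. The middle row of $\tfrac14A_1+\tfrac14A_2+\tfrac12A_3$ is $(\tfrac14+\tfrac14-\tfrac12)Wa=0$; the first row is $\tfrac W4(\rho_*-t)a+\tfrac W4(\rho_*+t)a-\tfrac W2\rho_*a=0$, the cancellation being precisely the reason for placing the densities symmetrically about $\rho_*$; and the third row is $W\,D(\rho_*,t)\,a$, where $D(\rho_*,t):=\tfrac14\bigl(\beta(\rho_*-t)+\beta(\rho_*+t)-2\beta(\rho_*)\bigr)$. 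So everything reduces to choosing $\rho_*$ and $t$ with $D(\rho_*,t)=1$.

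To do this with control on $C$, I invoke strong convexity~\eqref{stronglyconvex} with $x_1=\rho_*-t$, $x_2=\rho_*+t$, $\lambda=\tfrac12$, which gives $D(\rho_*,t)\ge\tfrac{\kappa}{2}t^2$. I fix $\rho_*$ depending only on $\kappa$ (hence only on $\beta$) so large that $\tfrac{\kappa}{2}\rho_*^2>1$; since $t\mapsto D(\rho_*,t)$ is continuous on $[0,\rho_*)$, vanishes at $t=0$, and exceeds $1$ for $t$ sufficiently close to $\rho_*$, the intermediate value theorem yields $\bar t<\rho_*$ (depending only on $\beta$) and then some $t_0\in(0,\bar t)$ with $D(\rho_*,t_0)=1$. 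The three densities $\rho_*-t_0$, $\rho_*$, $\rho_*+t_0$ then lie in $[\rho_*-\bar t,\ \rho_*+\bar t]$ with $\rho_*-\bar t>0$, and all middle rows have norm $W$. Thus $\nu:=\tfrac14\delta_{A_1}+\tfrac14\delta_{A_2}+\tfrac12\delta_{A_3}$ is a rank-$2$ laminate of order three with $\bar\nu=U$ and $\supp\nu\subset K_C$ for $C:=\max\{C_\beta,W\}$, where $C_\beta:=\max\{\rho_*+\bar t,\ (\rho_*-\bar t)^{-1}\}>1$ depends only on $\beta$; and $C\le\max\{C_\beta,4W\}$ because $W\ge1$.

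The only genuine difficulty is keeping $C$ linear in $W$ while keeping $C_\beta$ finite: this is exactly what the rescaling $|\xi|=W$ (rather than the naive $|\xi|=1$) buys us, since otherwise the equation to solve would be $D(\rho_*,t)=W$, and for a $\beta$ whose second derivative is bounded near a fixed point one would be forced to let $\rho_*\sim\sqrt W$ — still workable, but at the cost of $\beta$-non-universal densities and messier bookkeeping. Everything else is a direct verification of the three scalar identities above and of the (rank-$1$) rank conditions.
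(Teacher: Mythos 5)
Your proof is correct and in essence the same as the paper's: both realize $U$ as the barycenter of an order-three rank-2 laminate whose atoms have $v$-row a scalar multiple of $w$ (so all rank conditions in Definition~\ref{rk2laminate} trivialize to rank one), and both use strong convexity of $\beta$ to solve a $|w|$-independent scalar system for the densities, which is what keeps the density contribution $C_\beta$ universal while $C$ grows only linearly in $|w|$. Your symmetric choice (weights $\tfrac14,\tfrac14,\tfrac12$, $v$-rows $w,w,-w$, densities $\rho_*-t,\rho_*+t,\rho_*$) differs only in parametrization from the paper's (weights $\tfrac12,\tfrac14,\tfrac14$, $v$-rows $w,w,-3w$, densities $1,\rho_1,\rho_2$ with $\rho_1=3\rho_2-2$) and in fact yields the marginally sharper bound $C\le\max\{C_\beta,|w|\}$.
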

The proof will be given in Section~\ref{geom}.\\

\textbf{Step 4: Subsequent steps of the iteration.} The last lemma we need reads as follows:
\begin{lemma}\label{geom2}
Let $\epsilon>0$ and $\tilde{C}>1$. There exists a strictly increasing continuous function $h:[0,\infty)\to[0,\infty)$, depending only on $\tilde{C}$ and $\beta$, with $h(0)=0$, and a number $\delta>0$, depending only on $\tilde{C}$, $\beta$, and $\epsilon$, such that for every $1<C<\tilde{C}-\epsilon$ and every $U\in\Rbb^{3\times3}$ such that $\dist(U,K^{2lc}_{C})<\delta$, there exists a rank-2 laminate $\nu=\sum_{i=1}^n\lambda_i\delta_{U_i}$ such that
\begin{equation}\label{expectation}
U=\sum_{i=1}^n\lambda_iU_i,
\end{equation}
\begin{equation}\label{L1estimate}
\sum_{i=1}^n\lambda_i|U_i-U|\leq h\left(\dist(U,K_{C})\right),
\end{equation}
and
\begin{equation*}
\supp\nu\subset K_{C+\epsilon}.
\end{equation*}
\end{lemma}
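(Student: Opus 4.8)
The plan is to reduce the statement to a local, quantitative construction near the manifold $K_C$, glued with a crude estimate far from it. As a preliminary remark, $C\mapsto K_C$ is monotone, so $K^{2lc}_C\subseteq K^{2lc}_{\tilde C}$, and the right-hand side is contained in a fixed bounded set whose diameter $D$ depends only on $\tilde C$ and $\beta$; since $K_C\subseteq K^{2lc}_C$, the hypothesis $\dist(U,K^{2lc}_C)<\delta$ forces $\dist(U,K_C)\le\delta+D$. Hence it is enough to build $h$ on the bounded interval $[0,\delta+D]$, where we are free to take $h$ as large as any a priori bound on the size of the configurations we produce; the whole content therefore lies in the behaviour of $h$ near $0$, i.e.\ in producing laminates of arbitrarily small $L^1$-spread when $\dist(U,K_C)$ is small.

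The geometric heart is the following one-step assertion, to be extracted from the analysis of $K_C$ in Section~\ref{geom}: there are an increasing modulus $h_0$ with $h_0(0)=0$, depending only on $\tilde C$ and $\beta$, and $\delta_0>0$, depending also on $\epsilon$, such that for $1<C<\tilde C-\epsilon$, every $P=(\rho_0 v_0,v_0,\beta(\rho_0)v_0)\in K_C$, and every $U$ with $|U-P|<\delta_0$, one can write $U$ as a rank-$2$ laminate with atoms in $K_{C+\epsilon}$ and $L^1$-spread $\le h_0(|U-P|)$. To prove it I would parametrize $K_C$ by $(\rho,v)$, split $U-P$ into its component tangent to $K_C$ at $P$ and a transverse component of size $O(|U-P|)$, and absorb each in turn. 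The tangential part corresponds, up to a curvature error $O(|U-P|^2)$, to moving the base point along $K_{C+\epsilon}$, hence costs $O(|U-P|)$. The transverse part is realized by composing finitely many elementary two-atom rank-$\le2$ laminations of an economical kind: one atom lies within $O(|U-P|)$ of the current base point and carries almost all the mass, while the second atom is allowed to sit ``far'' --- at distance bounded in terms of $\tilde C$ and $\beta$ only --- but carries mass only of order $|U-P|$, and is placed so that the barycenter is pushed in the desired transverse direction by an amount of order $|U-P|$; its contribution to the spread is then $O(|U-P|)$. It is here that strong convexity of $\beta$ (together with Proposition~\ref{strong}) and the dimension $d=3$ enter: the directions needed to move the $w$-row are produced by laminations in which the $\rho$-values spread out, with a convexity defect controlled quantitatively from below, and between two admissible atoms the difference matrix has all rows in the $2$-plane spanned by the two velocities and so is automatically of rank $\le2$ (in $d=2$ one would be forced to rank $\le1$ and essentially no chord would qualify). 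Composing these two-atom splittings along a finite tree yields a genuine rank-$2$ laminate, since the tree structure matches the recursive $H_n$-condition, with barycenter $U$ and spread $\le h_0(|U-P|)$.

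With this in hand I would conclude as follows. If $\dist(U,K_C)<\delta_0$, apply the one-step assertion with $P$ the nearest point of $K_C$ to $U$. If $\dist(U,K_C)\ge\delta_0$, use instead the a priori bound above together with a fattening property of the rank-$2$ hull --- namely that for $\delta$ small enough $\dist(U,K^{2lc}_C)<\delta$ implies $U\in K^{2lc}_{C+\epsilon}$ --- so that $U$ admits at least one rank-$2$ laminate supported in $K_{C+\epsilon}$, of automatically bounded spread, which we absorb into $h$. Choosing $\delta>0$ small in terms of $\tilde C$, $\beta$, $\epsilon$ ensures all atoms land in $K_{C+\epsilon}$: the ``near'' atoms differ from $P$ by $O(\dist(U,K_C))+O(\delta)$, and since $C<\tilde C-\epsilon$ the admissible windows for $\rho$ and for $|v|$ gain room of order $\epsilon/\tilde C^2$ upon replacing $C$ by $C+\epsilon$, while the ``far'' atoms have $\rho$ and $|v|$ bounded uniformly in $C$. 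Finally one patches $h_0$ near $0$ with a large constant further out into a single strictly increasing continuous $h$ on $[0,\infty)$, depending only on $\tilde C$ and $\beta$.

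The main obstacle is the one-step assertion of the second paragraph, and specifically the verification that every transverse direction to the $4$-dimensional manifold $K_C$ can be generated by admissible rank-$\le2$ laminations that stay inside $K_{C+\epsilon}$, with a modulus uniform over $1<C<\tilde C-\epsilon$. Since some of these directions --- those involving the $w$-row --- are reachable only through the quadratic convexity defect of $\beta$ and through auxiliary atoms far from the base point, the bookkeeping (placing all auxiliary atoms simultaneously in $K_{C+\epsilon}$, controlling their masses and displacements, and organizing the splittings into a valid $H_n$-laminate) is the delicate part, and is exactly where the detailed geometry developed in Section~\ref{geom} is needed.
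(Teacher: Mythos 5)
Your outline correctly locates the heart of the matter, but it leaves precisely that part unproved. What you call the ``one-step assertion'' --- given $P\in K_C$ and $U$ with $|U-P|$ small, one can write $U$ as a rank-$2$ laminate supported in $K_{C+\epsilon}$ with spread controlled by $|U-P|$ --- \emph{is} the lemma, minus the routine reduction from $K^{2lc}_C$ to $K_C$ and the extraction of a modulus $h$. You gesture at a plan (decompose $U-P$ into tangential and transverse parts relative to the $4$-manifold $K_C$, absorb the transverse part by ``economical'' near/far two-atom laminations exploiting the strong-convexity defect of $\beta$), but you never exhibit the splittings, never verify that the resulting family of pairs genuinely satisfies the recursive $H_n$-condition, and you yourself flag that ``the main obstacle is the one-step assertion'' and that ``the bookkeeping \dots\ is the delicate part, and is exactly where the detailed geometry developed in Section~\ref{geom} is needed.'' In other words, the proposal is a plan of attack, not a proof. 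The paper fills this gap with an explicit three-level algebraic hierarchy: Step~3 peels off the component of $m$ not parallel to $v$ by choosing $w_1=w_2=w$ and solving $v_1+v_2=2v$, $\alpha_1 v_1+\alpha_2 v_2=2m$; Step~2 then peels off the component of $w$ not parallel to $v$ by a symmetric two-atom split in the $(v,w)$-plane with $m_i=\alpha v_i$; and Step~1 handles the remaining fully collinear case, where the strong convexity of $\beta$ (via Proposition~\ref{strong}) supplies exactly the one scalar degree of freedom $\gamma$ in $w=\gamma v$ that cannot be reached by moving $v$. None of this is recoverable from the tangential/transverse picture without actually doing the algebra.

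There is also a structural flaw in your concluding paragraph. For the case $\dist(U,K_C)\geq\delta_0$ you invoke a ``fattening property'' of the rank-$2$ hull, namely that $\dist(U,K^{2lc}_C)<\delta$ implies $U\in K^{2lc}_{C+\epsilon}$. But that statement is, up to the quantitative bound, precisely the existence part of the lemma you are proving, so using it as an ingredient is circular unless you derive it --- and the derivation is exactly the paper's Step~4, which treats all $U$ with $\dist(U,K^{2lc}_C)<\delta$ uniformly: write $U=\sum_i\lambda_i U_i+\tilde U$ with $U_i\in K_C$ satisfying the $H_n$-condition and $|\tilde U|<\delta$, observe that each $U_i+\tilde U$ lies within $\delta$ of $K_C$, and apply the one-step construction to each. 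There is no need for a separate regime $\dist(U,K_C)\geq\delta_0$ at all; once the $K^{2lc}_C$ decomposition is used, every node of the tree is automatically close to $K_C$. So beyond the unproved geometric core, your reduction from $K^{2lc}_C$ to $K_C$ is also not carried out cleanly, whereas the paper's Step~4 handles it in one move.
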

The proof is postponed to Section~\ref{geom}.

\begin{remark}
If $x\mapsto U(x)$ is measurable and satisfies the assumptions of Lemma \ref{geom1} or \ref{geom2} for almost every $x$, respectively, then the laminates $\nu_x$ obtained from the respective lemma form a weakly* measurable family, i.e. a Young measure.  
\end{remark}

\textbf{Step 5: Conclusion.} We are now ready to prove Theorem~\ref{mainthm}. Let $f$ be as in the statement of the theorem. Our goal is to inductively define a sequence $(m_n,v_n,w_n)_{n\geq0}$ of solutions to~\eqref{linear} that approaches the constraint set $K_{C}$ in a suitable sense, for a suitable constants $C>1$.  

First we define the triplet $(m_0,v_0,w_0)$ by setting $v_0\equiv0$, $m_0\equiv0$; $w_0$ is chosen as a bounded continuous solution of $\operatorname{div}w=f$, which exists by assumption. Since the divergence is not affected by adding a constant, we may assume $|w_0(x)|\geq1$ in $\bar{\Omega}$.

Next, let $C_0>1$ be as required by Lemma~\ref{geom1} applied to $U_0(x)$ for \emph{all} $x\in\bar{\Omega}$ (this is possible since $U_0$ is bounded). Next, pick a sequence $(C_n)_{n\geq0}$ that is strictly increasing such that $C_n\nearrow C_0+1=:C$ as $n\to\infty$. We also set $\epsilon_n:=C_{n+1}-C_n$. Then, $(\epsilon_n)$ is a sequence of positive numbers converging to zero.

Identifying $(m_0,v_0,w_0)$ with its corresponding matrix field $U_0$, by Lemma~\ref{geom1} there exists for almost every $x\in\Omega$ a rank-2 laminate $\nu^0_x$ of finite order whose expectation is $U_0(x)$ and whose support is contained in $K_{C_0}$. This completes the definition of $U_0$ and $\nu^0$.

Suppose now that $U_n$ and $\nu^{n}$ have already been constructed for some $n\geq0$ in such a way that $\supp\nu^n\subset K_{C_n}$ and~\eqref{matrix},~\eqref{expectation},~\eqref{L1estimate} are satisfied, that is:
\begin{equation*}
\Div(U_n)=(0,0,f)^T,
\end{equation*}
\begin{equation}\label{nexpectation}
U_{n}(x)=\bar{\nu}_x^n,
\end{equation}
\begin{equation*}
\int_{\Rbb^{3\times3}}|V-U_n(x)|d\nu_x^n(V)\leq h\left(\dist(U_n,K_{C_{n-1}})\right).
\end{equation*}
The last estimate is claimed only for $n\geq1$. By Lemma~\ref{geom2}, where we set $\epsilon=\epsilon_{n+1}$ and $\tilde{C}=C+1$, there exists $\delta_{n+1}=\delta(\epsilon_{n+1})$ such that whenever
\begin{equation*}
\dist(U,K^{2lc}_{C_n})<\delta_{n+1},
\end{equation*}
then there exists a rank-2 laminate whose expectation is $U$ and whose support is contained in
\begin{equation}\label{n+1}
K_{C_n+\epsilon_{n+1}}\subset K_{C_{n+1}}.
\end{equation}
Therefore we apply Lemma~\ref{approx} to $(\nu^n_x)$ with $K_{C_n}$, $\epsilon=\delta_{n+1}$, and 
\begin{equation*}
\psi=h\left(\dist(\frarg,K_{C_n})\right).
\end{equation*}
This yields a matrix field $U_{n+1}$ satisfying 
\begin{equation*}
\Div(U_{n+1})=\Div\left(\bar{\nu}^n_x\right)=\Div(U_n)=(0,0,f)^T,
\end{equation*}
\begin{equation}\label{Kapprox}
\int_{\Omega}h\left(\dist(U_{n+1}(x),K_{C_n})\right)dx<\delta_{n+1},
\end{equation}
and
\begin{equation}\label{distn+1}
\norm{\dist(U_{n+1}(x),K_{C_n}^{2lc})}_{L^{\infty}(\Omega)}<\delta_{n+1}.
\end{equation}
Therefore, by~\eqref{n+1}, we can indeed find, for every $x$, a rank-2 laminate $\nu^{n+1}_x$ with support in $K_{C_{n+1}}$ satisfying~\eqref{expectation} and~\eqref{L1estimate}. This completes the construction of the sequence $(U_n)$.

Next, using~\eqref{nexpectation},~\eqref{expectationclose},~\eqref{L1estimate}, and~\eqref{Kapprox}, we obtain for $n\geq1$
\begin{equation}\label{cauchy}
\begin{aligned}
\int_{\Omega}|U_{n+1}(x)-U_n(x)|dx&=\int_{\Omega}\left|U_{n+1}(x)-\bar{\nu}_x^n\right|dx\\
&\leq\int_{\Omega}\int_{\Rbb^{3\times3}}|V-\bar{\nu}_x^n|d\nu_x^ndx+\delta_{n+1}\\
&\leq\int_{\Omega}h\left(\dist(U_n(x),K_{C_{n-1}})\right)dx+\delta_{n+1}\\
&\leq\delta_n+\delta_{n+1}.
\end{aligned}
\end{equation}

By~\eqref{cauchy} and since we may assume $\delta_n\leq\epsilon_n$, the sequence $(U_n)$ is Cauchy in $\Lrm^1(\Omega)$. Indeed, this follows from $\sum_{n=0}^{\infty}\epsilon_n=C-C_0=1$. Therefore, $(U_n)$ converges strongly in $\Lrm^1$ to a limit matrix field $U_{\infty}\in\Lrm^1(\Omega)$, and up to a subsequence (not relabeled) the convergence even takes place almost everywhere.

Finally, by~\eqref{distn+1} and the observation that $K_{C_n}\subset K_{C}$ for every $n$, the sequence $(U_n)$ is bounded in $\Lrm^{\infty}$, and by~\eqref{Kapprox}
\begin{equation*}
\int_{\Omega}h\left(\dist(U_{n+1}(x),K_{C})\right)dx\leq\int_{\Omega}h\left(\dist(U_{n+1}(x),K_{C_n})\right)dx\to0
\end{equation*}   
as $n\to\infty$. It follows then from dominated convergence that 
\begin{equation*}
\int_{\Omega}h\left(\dist(U_{\infty}(x),K_{C})\right)dx=0,
\end{equation*}
so that $U_{\infty}(x)\in K_{C}$ for almost every $x\in\Omega$. 

As a final observation, since $v_0\equiv0$ and the boundary values of $U_n^Te_2$ remain unchanged in passing from $n$ to $n+1$ thanks to the last statement of Lemma~\ref{approx}, we may conclude $U_{\infty}^Te_2\in H(\Omega)$. According to Step 1, $U_{\infty}$ thus gives rise to the desired solution.\qed

\section{Recovery Sequences for Rank-2 Laminates}\label{recovery}
In this section we prove Lemma~\ref{approx}.

The approximating maps for parametrized measures, whose existece is claimed in the Lemma, will be composed of \term{localized plane waves} as in~\cite{DLSz2008DI}, which satisfy the divergence-free condition
\begin{equation}\label{homo}
\begin{aligned}
\Div(m)&=0\\
\Div(v)&=0\\
\Div(w)&=0.
\end{aligned}
\end{equation}

A \term{plane wave solution} is a solution of~\eqref{homo} of the form $(\bar{m},\bar{v},\bar{w})h(x\cdot\xi)$, where $(\bar{m},\bar{v},\bar{w})\in\Rbb^{3\times3}$ is constant and $\xi\in\Rbb^3\setminus\{0\}$. The function $h:\Rbb\to\Rbb$ is called the \term{profile function}. The \term{wave cone} of~\eqref{homo} is then defined as
\begin{equation*}
\begin{aligned}
\Lambda&:=\left\{(\bar{m},\bar{v},\bar{w})\in\Rbb^{3\times3}:\text{There exists $\xi\neq0$ such that}\right.\\
&\left.\text{$(\bar{m},\bar{v},\bar{w})h(x\cdot\xi)$ satisfies~\eqref{homo} for every smooth $h:\Rbb\to\Rbb$}\right\}.  
\end{aligned}
\end{equation*}
The characterization of the wave cone is standard. To formulate it, it is convenient to identify a triplet $(m,v,w)$ with the matrix $U$ whose rows are given by $m$, $v$ and $w$. Condition~\eqref{homo} then means that 
\begin{equation}\label{homomatrix}
\Div(U)=0,
\end{equation}
where the divergence is taken row-wise as usual.  
\begin{proposition}\label{wavecone}
The wave cone for~\eqref{homomatrix} is the set of all matrices $\bar{U}\in\Rbb^{3\times3}$ whose determinant is zero.
\end{proposition}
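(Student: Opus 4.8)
The plan is to reduce the condition defining the wave cone $\Lambda$ to the purely linear-algebraic statement that $\bar{U}\xi=0$ for some $\xi\neq0$. First I would compute the row-wise divergence of a plane wave. Writing $\bar{U}$ for the matrix with rows $\bar m,\bar v,\bar w$, the chain rule gives $\partial_j\bigl(h(x\cdot\xi)\bigr)=\xi_j\,h'(x\cdot\xi)$, so the $i$-th component of $\Div\bigl((\bar m,\bar v,\bar w)h(x\cdot\xi)\bigr)$ equals $h'(x\cdot\xi)\,(\text{row}_i(\bar U)\cdot\xi)$. Hence, as a vector identity on $\Rbb^3$, equation~\eqref{homomatrix} for the plane wave with frequency $\xi$ and profile $h$ reads $h'(x\cdot\xi)\,\bar{U}\xi=0$.

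From this identity both inclusions follow immediately. For the forward direction, suppose $\bar{U}\in\Lambda$, so there is $\xi\neq0$ such that $h'(x\cdot\xi)\,\bar{U}\xi=0$ for \emph{every} smooth $h$; choosing the linear profile $h(t)=t$, for which $h'\equiv1$, forces $\bar{U}\xi=0$. Since $\xi\neq0$, the kernel of $\bar{U}$ is nontrivial, i.e. $\det\bar{U}=0$. Conversely, if $\det\bar{U}=0$, pick any $\xi\neq0$ in $\ker\bar{U}$; then $\bar{U}\xi=0$, so $h'(x\cdot\xi)\,\bar{U}\xi=0$ identically for every $h$, and therefore $(\bar m,\bar v,\bar w)h(x\cdot\xi)$ solves~\eqref{homo} for every smooth $h$. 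Thus $\bar{U}\in\Lambda$, which completes the characterization.

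I do not expect any genuine obstacle in this argument; it is a routine computation. The only point deserving a little care is the universal quantifier ``for every smooth $h$'' in the definition of $\Lambda$: in the forward implication one must single out one convenient profile (the linear one) for which the divergence-free constraint already pins down $\bar{U}\xi=0$, rather than attempting to argue with a generic profile. It may also be worth remarking, in view of Section~\ref{geom}, that the computation identifies $\Lambda$ with $\{\bar{U}\in\Rbb^{3\times3}:\rank\bar{U}\le2\}$, which is precisely the source of the rank-$2$ structure exploited throughout the paper.
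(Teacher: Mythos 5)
Your argument is correct and is essentially the paper's own proof spelled out in full: both hinge on the single identity $\Div(\bar{U}h(x\cdot\xi))=h'(x\cdot\xi)\bar{U}\xi$, from which the two inclusions are immediate. The only extra care you take (choosing $h(t)=t$ to dispense with $h'$) is implicit in the paper's one-line proof.
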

\begin{proof}
This follows immediately from the fact that 
$
\Div\left(\bar{U}h(x\cdot\xi)\right)=h'(x\cdot\xi)\bar{U}\xi.
$
\end{proof}
We are now ready to prove Lemma~\ref{approx}, which we recall for convenience:
\begin{lemma*}
Let $K$ be a compact subset of $\Rbb^{3\times3}$, and $(\nu_x)_{x\in\Omega}$ be a weakly*-measurable family of probability measures such that
\begin{itemize}
\item[a)] the measure $\nu_x$ is a rank-2 laminate of finite order for almost every $x\in\Omega$,
\item[b)] $\supp\nu_x\subset K$ for almost every $x$.
\end{itemize}
Assume further that $\psi\in C(\Rbb^{3\times3};\Rbb)$ is a non-negative function that vanishes on $K$. Then, for every $\epsilon>0$ there exists a matrix-valued function $U$ such that
\begin{itemize}
\item[i)] $\Div U(x)=\Div\bar{\nu}_x$ \hspace{0.2cm} for almost every $x\in\Omega$,
\item[ii)] \begin{equation*}
\int_{\Omega}\psi(U(x))dx<\epsilon,
\end{equation*}
\item[iii)] \begin{equation*}
\norm{\dist(U(x),K^{2lc})}_{L^{\infty}(\Omega)}<\epsilon,
\end{equation*}
\item[iv)]
\begin{equation}\label{expectationclose2}
\int_{\Omega}\left|U(x)-\bar{\nu}_x\right|dx<\int_{\Omega}\int_{\Rbb^{3\times3}}\left|V-\bar{\nu}_x\right|d\nu_x(V)dx+\epsilon.
\end{equation}
\end{itemize}
Moreover, if $\bar{\nu}\in C(\bar{\Omega})$, then $U$ can be chosen to satisfy $U\in C(\bar{\Omega})$ and
\begin{equation*}
U(x)=\bar{\nu}_x\hspace{0.2cm}\text{on $\partial\Omega$.}
\end{equation*}
\end{lemma*}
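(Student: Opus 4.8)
The plan is to prove Lemma~\ref{approx} by a standard localization-and-superposition argument for convex integration, proceeding by induction on the order of the laminates. I first reduce to the case where the family $(\nu_x)$ is (essentially) piecewise constant: since $(\nu_x)$ is weakly* measurable and the target quantities in (ii)--(iv) depend continuously on the laminate, I partition $\Omega$ into finitely many measurable pieces $\Omega_j$ of small diameter on each of which $\nu_x$ may be replaced (up to an error controlled by $\epsilon$) by a single rank-2 laminate $\nu^{(j)}=\sum_i\lambda_i\delta_{U_i}$ of fixed finite order; the continuous boundary case is handled by a further mollification of $\bar\nu$ near $\partial\Omega$. It then suffices to produce, on a fixed open set with a prescribed affine (or continuous) boundary datum matching $\bar\nu^{(j)}$, a map $U$ that is close to $K^{2lc}$ in $L^\infty$, makes $\psi(U)$ small in $L^1$, has the right distributional divergence, and stays $L^1$-close to the constant $\bar\nu^{(j)}$.

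The heart is the single-piece construction, which I carry out by induction on the laminate order $n$. For $n=1$ there is nothing to do: $U\equiv U_1\in K$. For the inductive step, write $\nu^{(j)}=\lambda_1\delta_{U_1}+\lambda_2\delta_{U_2}+\sum_{i\ge3}\lambda_i\delta_{U_i}$ with $\rank(U_2-U_1)\le2$, so $U_2-U_1$ lies in the wave cone $\Lambda$ by Proposition~\ref{wavecone}. Using a localized plane wave with direction $\xi$ (where $(U_2-U_1)\xi=0$) and a carefully chosen profile function $h$ together with a cutoff $\chi\in C_c^\infty$, I build a divergence-free perturbation that, on a large-measure subset, takes $\bar\nu^{(j)}$ to the \emph{first-generation} values: on a fraction $\approx\lambda_1/(\lambda_1+\lambda_2)$ of the cell the field equals $V_1=\frac{\lambda_1}{\tau_1}U_1+\frac{\lambda_2}{\tau_1}U_2$ minus $\frac{\lambda_2}{\tau_1}(U_2-U_1)$, i.e. $U_1$, and on the complementary fraction it equals $V_1+\frac{\lambda_1}{\tau_1}(U_2-U_1)=U_2$. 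The small boundary-layer region where the cutoff is active is controlled in measure by taking the cutoff support small, and there $U$ stays within $K^{2lc}$ up to a fixed distance which we then kill in a later step; the $L^1$-deviation from $\bar\nu^{(j)}$ on this cell is, by construction, at most $\lambda_1|U_1-\bar\nu^{(j)}|+\lambda_2|U_2-\bar\nu^{(j)}|+\sum_{i\ge3}\lambda_i|U_i-\bar\nu^{(j)}|$ plus a boundary-layer contribution $O(\text{small})$. One then iterates: on the piece where $U=V_1$ apply the inductive hypothesis to the order-$(n-1)$ laminate $(\tau_i,V_i)$, and on the piece where (after the first split we have already reached $U_1$ or $U_2\in K$) there is nothing left to do. Summing the geometric-type bookkeeping of the errors across generations gives (ii) (because $\psi$ is continuous and vanishes on $K$, and after finitely many generations the field lands on the atoms $U_i\in K$ up to arbitrarily small $L^\infty$ error, so $\psi(U)$ is uniformly small off a set of small measure), (iii) (each generation keeps us within $K^{2lc}$ up to an error we can absorb), and (iv) (telescoping the per-cell $L^1$-bounds recovers exactly $\int_\Omega\int|V-\bar\nu_x|\,d\nu_x\,dx$ plus the accumulated cutoff errors, all $<\epsilon$).

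Two technical points deserve care. First, the plane wave $(\bar m,\bar v,\bar w)h(x\cdot\xi)$ is divergence-free only for the \emph{homogeneous} system~\eqref{homo}, so I must add these perturbations to a fixed background $U_0$ with $\Div U_0=(0,0,f)^T$ (which is exactly how the lemma is applied in Step~5 via $\bar\nu$); the claim $\Div U=\Div\bar\nu$ then follows because all perturbations have zero divergence, and to pass from single plane waves to genuinely localized, compactly supported perturbations I cut off by $\chi$ and correct the resulting divergence error by a potential-theoretic term that is small in $L^\infty$ — this is the classical localization lemma and is where one invokes that $U_2-U_1$ is in the \emph{interior} of the relevant rank-2 directions or uses a suitable solenoidal potential. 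Second, the boundary statement: when $\bar\nu\in C(\bar\Omega)$ I first approximate $\bar\nu$ uniformly by a field that is locally constant in the interior and equals $\bar\nu$ on $\partial\Omega$, run the above on the interior only (shrinking cells toward the boundary), and keep all perturbations compactly supported inside $\Omega$, so the boundary trace is untouched and $U$ is continuous.

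The main obstacle I expect is the quantitative bookkeeping across the $n$ generations of the induction: one must choose, generation by generation, the cutoff supports and the oscillation frequency of $h$ small/large enough that the cumulative boundary-layer measure, the cumulative distance to $K^{2lc}$, and the cumulative $L^1$-error all stay below the single prescribed $\epsilon$, while the number of generations is only finite (by hypothesis a) but a priori unbounded in $x$ unless one has first reduced to finitely many pieces of uniformly bounded order. Getting the reduction in the first paragraph genuinely uniform — i.e. truncating the (a.e.-finite) laminate order to a single $N$ at the cost of $\epsilon$, using that high-order tails contribute little to $\int|V-\bar\nu_x|\,d\nu_x$ — is the step that makes the finite induction legitimate, and is the part I would write most carefully.
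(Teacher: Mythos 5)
You have the right broad strategy—localized plane waves in wave-cone directions, cut off and corrected via solenoidal potentials, iterated by induction on laminate order, preceded by a reduction to piecewise-constant parametrized measures (the paper does these two reductions in the opposite order, homogeneous first and then piecewise, but that is cosmetic)—and your observation that one should truncate the laminate order uniformly at a small $L^1$-cost is a legitimate technical point. However, the inductive step you describe does not work as written. You claim a single plane wave in the direction $U_2-U_1$ takes $\bar\nu^{(j)}=\sum_i\lambda_iU_i$ to $U_1$ on a fraction $\lambda_1/(\lambda_1+\lambda_2)$ of the cell and to $U_2$ on the rest. But $\bar\nu^{(j)}+t(U_2-U_1)$ can hit $U_1$ or $U_2$ only if $\bar\nu^{(j)}$ already lies on the line through $U_1,U_2$, which is true when $n=2$ but false in general. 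Your next sentence, ``on the piece where $U=V_1$ apply the inductive hypothesis to $(\tau_i,V_i)$'', also contradicts what precedes it, since by your own description the field takes only the values $U_1,U_2$ and never $V_1$.

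If what you really intend is a bottom-up induction—first recover the order-$(n-1)$ laminate $(\tau_i,V_i)$, then split $V_1$ into $U_1,U_2$ on the $V_1$-piece—you hit a different obstruction: $(\tau_i,V_i)$ is not supported in $K$, because $V_1=\frac{\lambda_1U_1+\lambda_2U_2}{\lambda_1+\lambda_2}$ lies in $K^{2lc}$ but generically not in $K$. The statement being proved by induction requires $\supp\nu_x\subset K$ for the hypotheses on $\psi$ (which vanishes on $K$) and on $\dist(\cdot,K^{2lc})$ to be usable, so the inductive hypothesis cannot be invoked for $(\tau_i,V_i)$. The paper's Step~1 avoids this by running the induction in the other direction: it first splits $\bar\nu$ into $U_{n+1}$ and $\bar U:=\frac{\sum_{i\le n}\lambda_iU_i}{\sum_{i\le n}\lambda_i}$ along a rank-2 line (noting $\rank(U_{n+1}-\bar U)\le 2$), handles this order-2 split by the base case, and then on the set where the field equals $\bar U$ applies the inductive hypothesis to the order-$n$ laminate $\frac{\sum_{i\le n}\lambda_i\delta_{U_i}}{\sum_{i\le n}\lambda_i}$, whose support $\{U_1,\ldots,U_n\}$ is still contained in $K$. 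That reorganization—always peeling off one atom and recursing on a sub-laminate still supported in $K$—is what your argument is missing. A minor side remark: your appeal to $U_2-U_1$ being in the ``interior'' of the rank-2 directions is unnecessary; the localization is done by writing the divergence-free plane wave as $\curl\Phi_n$ with $\|\Phi_n\|_{L^\infty}\to 0$ and cutting off the potential $\Phi_n$ rather than the field, which keeps the result exactly divergence-free and needs no genericity of the direction.
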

\begin{remark}
In the situation of the Lemma, we say that $U$ approximates the parametrized measure $(\nu_x)$ with precision $\epsilon$.
\end{remark}
\begin{proof}
\textbf{Step 1.} Suppose first that we are dealing with a homogeneous measure with zero expectation, i.e. $x\mapsto\nu_x$ is constant almost everywhere and $\bar{\nu}=0$. To start an inductive argument, consider first the case that $\nu$ is a rank-2 laminate of order 2, i.e. $\nu=\lambda\delta_{U_1}+(1-\lambda)\delta_{U_2}$ with $\rank(U_2-U_1)\leq2$ and $U_1, U_2\in K$. Therefore, by Proposition~\ref{wavecone}, there exists $\xi\in\Rbb^3$ such that the matrix field
\begin{equation*}
U_n(x)=U_1+(U_2-U_1)h(nx\cdot\xi)
\end{equation*}  
is divergence-free for any frequency $n$ and any profile $h$. We choose here as our profile the 1-periodic extension of the function
\begin{equation*}
h(t)=\begin{cases}
1 & \text{if}\hspace{0.2cm} t\in[0,1-\lambda)\\
0 & \text{if}\hspace{0.2cm} t\in[1-\lambda,1).
\end{cases}
\end{equation*}
To achieve zero boundary values, we use a standard cutoff technique as follows: Since $\Div (U_n)=0$, there exists another matrix field $\Phi_n$ such that
\begin{equation*}
U_n=\curl(\Phi_n),
\end{equation*}
the curl being taken row-wise. Moreover, it is not hard to see (e.g. by explicitly writing down a formula for $\Phi_n$) that the potentials $\Phi_n$ may be chosen in such a way that $\norm{\Phi_n}_{L^{\infty}(\Omega)}\to 0$ as $n\to\infty$. As a further remark, observe that $U_n$ (and thus also $\Phi_n$) can be taken smooth by means of a mollification of $h$ with a mollification parameter of size asymptotically $1/n^2$.

For $\delta>0$ let now $\eta_{\delta}\in C_c^\infty(\Omega)$ be a cutoff function such that $0\leq\eta_{\delta}\leq1$ and $\eta_\delta\equiv1$ for all $x\in\Omega$ for which $\dist(x,\partial\Omega)>\delta$. Then, by the product rule,
\begin{equation}\label{inftyestimate}
\norm{\eta_\delta U_n-\curl(\eta_\delta\Phi_n)}_{L^{\infty}(\Omega)}\leq C\norm{\eta_\delta}_{C^1}\norm{\Phi_n}_{L^{\infty}(\Omega)}\leq \frac{C}{\delta}\norm{\Phi_n}_{L^{\infty}(\Omega)},
\end{equation}
so that by choosing, say, $\delta=\delta(n)=\norm{\Phi_n}_{L^{\infty}(\Omega)}^{1/2}$, we can make the left hand side of~\eqref{inftyestimate} arbitrarily small by choosing $n$ sufficiently large. Thus, choosing $U(x)=\curl(\eta_{\delta(n_0)}\Phi_{n_0})$ for a sufficiently large $n_0$, we see that $U$ is as desired: Indeed, i) follows from the fact that $U$ is a curl, the continuity and boundary values follow by construction, iii) is an immediate consequence of~\eqref{inftyestimate} and the fact that $\eta_\delta U_n$ takes values in $K^{2lc}$ for every $x\in\Omega$; properties ii) and iv) are both implied by the observation that the sequence $(\curl(\eta_{\delta(n)}\Phi_{n}))_n$ is uniformly bounded in $L^\infty$ and generates $\nu$ in the sense of Young measures (cf. e.g. Chapter 3 in~\cite{Mull99VMMP}).  

For the induction step, we use the hypothesis that the Lemma be true for laminates of order $n$, and consider a laminate $\nu$ of order $n+1$:
\begin{equation*}
\nu=\sum_{i=1}^{n+1}\lambda_iU_i,
\end{equation*}
where $(\lambda_i,U_i)_i$ satisfies the $H_{n+1}$-condition. Define a laminate of second order by
\begin{equation*}
\tilde{\nu}=\lambda_{n+1}\delta_{U_{n+1}}+(1-\lambda_{n+1})\delta_{\bar{U}}
\end{equation*}
where 
\begin{equation*}
\bar{U}:=\frac{\sum_{i=1}^n\lambda_iU_i}{\sum_{i=1}^n\lambda_i}.
\end{equation*}
Using Definition~\ref{rk2laminate}, it is not hard to see that $\rank(U_{n+1}-\bar{U})\leq2$ and therefore $\tilde{\nu}$ is a rank-2 laminate of second order (we omit the conceivable case that $U_{n+1}=\bar{U}$, which is trivial). We may hence find an approximating map $\tilde{U}$ for $\tilde{\nu}$ with precision $\epsilon$ exactly as in the induction basis (observe that the expectation of $\tilde{\nu}$ is not necessarily zero, which does not matter for our construction however). By construction, the set 
\begin{equation*}
S=\left\{x\in\Omega: \tilde{U}(x)=\bar{U}\right\}
\end{equation*} 
is Lipschitz and we may assume that
\begin{equation*}
\left|\frac{|S|}{|\Omega|}-\sum_{i=1}^n\lambda_{i}\right|<\epsilon.
\end{equation*}
By the induction hypothesis together with Definition~\ref{rk2laminate}, there exists a map $U'$ on $S$ which approximates the measure
\begin{equation*}
\frac{\sum_{i=1}^n\lambda_i\delta_{U_i}}{\sum_{i=1}^n\lambda_i}
\end{equation*}
with precision $\epsilon$. Moreover, $U'=\bar{U}$ on the set $\{x\in S:\dist(x,\partial S)<\delta\}$ for some $\delta>0$. Hence the map defined by
\begin{equation*}
U(x)=\begin{cases}
U'(x) &\hspace{0.2cm}\text{if $x\in S$}\\
\tilde{U}(x) &\hspace{0.2cm}\text{if $x\in\Omega\setminus S$}
\end{cases}
\end{equation*}
is smooth and satisfies the requirements of the Lemma.

\textbf{Step 2.} As a next step, consider a possibly non-homogeneous measure $(\nu_x)_x$, whose expectation $\bar{\nu}$ is however still assumed to be identically zero. This case can be treated as usual by approximating $\nu$ by a piecewise homogeneous measure and applying Step 1 to each piece. For details see e.g. Section 4.9 in~\cite{Mull99VMMP}. Observe that, in this step, we may even allow $K$ to depend on $x\in\Omega$ (in a measurable fashion).

\textbf{Step 3.} Let now $(\nu_x)_x$ be of full generality as assumed in the Lemma. Consider the \term{shifted} measure $\mu_x$ defined by duality via
\begin{equation*}
\int_{\R^{d\times d}}h(z)d\mu_x(z)=\int_{\R^{d\times d}}h(z-\bar{\nu}_x)d\nu_x(z)
\end{equation*}
for a.e. $x\in\Omega$ and every test function $h\in C_b(\R^{d\times d})$. Then one sees easily that $\mu_x$ is still a rank-2 laminate, and moreover for its expectation $\bar{\mu}_x$ we have 
\begin{equation*}
\bar{\mu}_x=0\hspace{0.3cm}\text{for a.e. $x\in\Omega$.}
\end{equation*}
Applying Step 2 to $\mu$ with $K$ replaced by $K-\bar{\nu}_x$ (cf. the last observation in Step 2) yields an approximating map $W$ for $\mu$. One can then easily check that 
\begin{equation*}
U:=W+\bar{\nu}
\end{equation*}
approximates $\nu$ in the sense of the Lemma.
\end{proof}

\section{Geometry of the Nonlinear Constraint}\label{geom}
\subsection{Proof of Lemma~\ref{geom1}}
In this subsection we prove the first geometric lemma, which we recall for the reader's convenience:
\begin{lemma*}
Let $U\in\Rbb^{3\times3}$ such that $U^Te_1=U^Te_2=0$ and $|U^Te_3|\geq1$. Then there exists a rank-2 laminate $\nu=\sum_{i=1}^n\lambda_i\delta_{U_i}$ such that
\begin{equation*}
U=\sum_{i=1}^n\lambda_iU_i
\end{equation*}
and a number $C>1$ such that
\begin{equation*}
\supp\nu\subset K_{C}.
\end{equation*}
Moreover there exists a constant $C_\beta$ depending only on $\beta$ such that $C\leq\max\{C_\beta,4|U^Te_3|\}$.
\end{lemma*}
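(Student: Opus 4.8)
I would start from the observation that, since $U^{T}e_1$ and $U^{T}e_2$ are nothing but the first and second rows of $U$, the hypothesis says exactly that $U=(0,0,w)$ with $w:=U^{T}e_3\in\Rbb^{3}$ and $|w|\geq1$. The idea is to look for the laminate $\nu$ among probability measures supported on those matrices $(\rho v,v,\beta(\rho)v)\in K_{C}$ whose middle component $v$ is parallel to $w$. Writing $v=t\hat{w}$ with $\hat{w}:=w/|w|$, such a matrix has all three rows parallel to $\hat{w}$, hence lies in the three-dimensional subspace $\mathcal{W}_w\subset\Rbb^{3\times3}$ of matrices all of whose rows are multiples of $\hat{w}$; this subspace contains $U$ as well. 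Every element of $\mathcal{W}_w$ has rank $\leq1$, so the difference of any two of them has rank $\leq1\leq2$, and unwinding the inductive Definition~\ref{rk2laminate} shows that \emph{any} probability measure supported on finitely many points of $\mathcal{W}_w$ is automatically a rank-2 laminate. Identifying $\mathcal{W}_w\cong\Rbb^{3}$ via $(p\hat{w},q\hat{w},r\hat{w})\leftrightarrow(p,q,r)$ — so that $U\leftrightarrow(0,0,|w|)$ and the atom with parameters $(\rho,t)$ corresponds to $t(\rho,1,\beta(\rho))$ — the whole lemma collapses to the finite-dimensional problem of writing
\[
(0,0,|w|)=\sum_i\lambda_i t_i\,(\rho_i,1,\beta(\rho_i))\quad\text{in }\Rbb^{3},\qquad \lambda_i>0,\ \textstyle\sum_i\lambda_i=1,\ \rho_i,|t_i|\in[1/C,C].
\]

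To solve this I would fix three base points $\rho_1<\rho_2<\rho_3$ (to be chosen at the end) and set $\mu_i:=\lambda_i t_i$, which turns the identity into the linear system $M\mu=(0,0,|w|)^{T}$, where $M\in\Rbb^{3\times3}$ has columns $(\rho_i,1,\beta(\rho_i))^{T}$. Strong convexity makes $M$ invertible with a quantitative bound: $\det M$ equals, up to sign, $(\rho_3-\rho_1)(\rho_2-\rho_1)(\rho_3-\rho_2)$ times the second divided difference of $\beta$, which is $\geq\kappa$. Hence $\mu:=M^{-1}(0,0,|w|)^{T}$ is determined, and Cramer's rule gives the clean formula $\mu=\frac{|w|}{\det M}\,(\rho_2-\rho_3,\,\rho_3-\rho_1,\,\rho_1-\rho_2)$, so that $\|\mu\|_1=2|w|(\rho_3-\rho_1)/|\det M|\leq 2|w|/\bigl(\kappa(\rho_2-\rho_1)(\rho_3-\rho_2)\bigr)$. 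I would then realize $\mu_i=\lambda_i t_i$ by putting $t_i:=C\operatorname{sgn}(\mu_i)$ and $\lambda_i:=|\mu_i|/C$ (dropping the at most two indices with $\mu_i=0$), and restore unit total mass by appending two balancing atoms $(\rho,v)=(1,\hat{w})$ and $(1,-\hat{w})$, each of weight $\tfrac12(1-\|\mu\|_1/C)$: these lie in $K_{C}$ and their matrices cancel, so the expectation stays equal to $U$. The construction is legitimate precisely when $\rho_i\in[1/C,C]$ and $\|\mu\|_1\leq C$.

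It then remains to calibrate $C$ and the $\rho_i$. Choosing $\rho_1=1/C$, $\rho_3=C$ and $\rho_2$ halfway between makes $(\rho_2-\rho_1)(\rho_3-\rho_2)\geq C^{2}/16$ as soon as $C\geq2$, so $\|\mu\|_1\leq 32|w|/(\kappa C^{2})$, and hence $\|\mu\|_1\leq C$ holds whenever $C^{3}\geq 32|w|/\kappa$. Thus $C:=\max\{2,(32|w|/\kappa)^{1/3}\}$ works, and since $(32|w|/\kappa)^{1/3}\leq4|w|$ as soon as $|w|\geq(2\kappa)^{-1/2}$ — and is otherwise bounded by a constant depending only on $\kappa$ — this $C$ obeys the asserted estimate $C\leq\max\{C_\beta,4|U^{T}e_3|\}$ with $C_\beta:=\max\{2,(32/(\sqrt2\,\kappa^{3/2}))^{1/3}\}$.

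Most of this is routine once the ``rows parallel to $\hat{w}$'' reduction is in place: identifying the reduced problem, checking that the output is a genuine rank-2 laminate supported in $K_{C}$, and the balancing/weight bookkeeping. The step I expect to require real care is the final calibration — obtaining the bound $\max\{C_\beta,4|U^{T}e_3|\}$ forces the base points to be spread across the whole interval $[1/C,C]$ and relies crucially on the \emph{quantitative} convexity input (second divided difference $\geq\kappa$); with only strict convexity one could not control $C$ in terms of $|U^{T}e_3|$.
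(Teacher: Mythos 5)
Your proof is correct, and it takes a genuinely different route from the paper. The paper decomposes $U=(0,0,w)$ by hand: it writes $U=\tfrac12(w,w,w)+\tfrac12(-w,-w,w)$, observes that $(w,w,w)\in K_C$ (using $\beta(1)=1$), and then splits $(-w,-w,w)$ into two $K_C$-atoms with $v_1=w$, $v_2=-3w$; the resulting scalar system $-\rho_1+3\rho_2=2$, $-\beta(\rho_1)+3\beta(\rho_2)=-2$ is solved by the intermediate value theorem combined with Proposition~\ref{strong}, the ``reverse Jensen'' inequality for an affine combination with one negative coefficient. This yields an economical three-atom laminate. You instead isolate the rank-$1$ subspace $\mathcal{W}_w$ of matrices all of whose rows are multiples of $\hat{w}$, correctly observe that within $\mathcal{W}_w$ the laminate (rank-$2$) structure is automatic, and thereby reduce the problem to a positive-combination question in $\Rbb^3$. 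This you solve by Cramer's rule plus two sign-cancelling balancing atoms, with the quantitative input from strong convexity entering as the lower bound $\beta[\rho_1,\rho_2,\rho_3]\geq\kappa$ on the second divided difference, which bounds $\det M$ away from zero. Both arguments hinge on the same quantitative convexity, just packaged differently (reverse Jensen vs.\ divided difference), and both deliver the bound $C\leq\max\{C_\beta,4|U^Te_3|\}$. Your approach uses more atoms (five instead of three) but exposes the linear-algebra skeleton of the problem more transparently, and your closing remark about why \emph{strict} convexity alone would not give the stated $C$-bound is accurate and a genuine insight into the role of the hypothesis.
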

\begin{proof}
Let $U$ be as in the statement of the lemma. As usual, we identify it with the triplet $(m,v,w)$ of its row vectors, so by assumption, $m=v=0$ and $|w|\geq1$. We split $(0,0,w)$ into
\begin{equation*}
(0,0,w)=\frac{1}{2}\left(-w,-w,w\right)+\frac{1}{2}\left(w,w,w\right).
\end{equation*}  
If we call the matrices correponding to the two triplets on the right hand side $U_-$ and $U_+$, respectively, we first observe that $U_-$ and $U_+$ are rank-2 connected since $(U_--U_+)e_3=0$. Secondly, $U_+\in K_{C}$ for any $C$ such that
\begin{equation*}
C\geq|w|
\end{equation*} 
(recall $\beta(1)=1$ by Remark~\ref{betanormal}).

Next, let us further decompose $U_-$. We make the ansatz 
\begin{equation}\label{decomp}
\left(-w,-w,w\right)=\frac{1}{2}(\rho_1v_1,v_1,\beta(\rho_1)v_1)+\frac{1}{2}(\rho_2v_2,v_2,\beta(\rho_2)v_2)
\end{equation}
with
\begin{equation}\label{decomp2}
v_1=w,\hspace{0.2cm}v_2=-3w.
\end{equation}
Then clearly~\eqref{decomp} is a rank-2 decomposition (in fact even rank-1), and~\eqref{decomp} and~\eqref{decomp2} result in the conditions
\begin{equation}\label{rhosystem}
\begin{aligned}
-\rho_1+3\rho_2&=2\\
-\beta(\rho_1)+3\beta(\rho_2)&=-2.
\end{aligned}
\end{equation}
Let us show that these equations can be satisfied thanks to the strong convexity assumption on $\beta$. Indeed, suppose $-\rho_1+3\rho_2=2$. Then, using Proposition~\ref{strong}, we calculate
\begin{equation}\label{convexestimate}
\begin{aligned}
-\beta(\rho_1)+3\beta(\rho_2)&=2\left(-\frac{1}{2}\beta(\rho_1)+\frac{3}{2}\beta(\rho_2)\right)\\
&\leq2\beta\left(-\frac{1}{2}\rho_1+\frac{3}{2}\rho_2\right)-2\kappa\frac{3}{4}|\rho_1-\rho_2|^2\\
&=2-\frac{3}{2}\kappa|\rho_1-\rho_2|^2.
\end{aligned}
\end{equation}
Finally, the equation $-\rho_1+3\rho_2=2$ can be rewritten as $\rho_1-\rho_2=2\rho_2-2$, and therefore by~\eqref{convexestimate} we can achieve~\eqref{rhosystem} by choosing $\rho_2>1$ sufficiently large and then setting $\rho_1=3\rho_2-2>1$. 

Since, with this choice of $\rho_1,\rho_2$, the triplets $(\rho_1v_1,v_1,\beta(\rho_1)v_1)$ and $(\rho_2v_2,v_2,\beta(\rho_2)v_2)$ are in $K_{C}$ for a suitable $C$, the proof is finished. In particular, the estimate for $C$ in the statement of the lemma follows directly from our construction.
\end{proof}

\subsection{Proof of Lemma~\ref{geom2}} Recall Lemma~\ref{geom2}:
\begin{lemma*}
Let $\epsilon>0$ and $\tilde{C}>1$. There exists a strictly increasing continuous function $h:[0,\infty)\to[0,\infty)$, depending only on $\tilde{C}$ and $\beta$, with $h(0)=0$, and a number $\delta>0$, depending only on $\tilde{C}$, $\beta$, and $\epsilon$, such that for every $1<C<\tilde{C}-\epsilon$ and every $U\in\Rbb^{3\times3}$ such that $\dist(U,K^{2lc}_{C})<\delta$, there exists a rank-2 laminate $\nu=\sum_{i=1}^n\lambda_i\delta_{U_i}$ such that
\begin{equation}\label{expectation2}
U=\sum_{i=1}^n\lambda_iU_i,
\end{equation}
\begin{equation}\label{L1estimate2}
\sum_{i=1}^n\lambda_i|U_i-U|\leq h\left(\dist(U,K_{C})\right),
\end{equation}
and
\begin{equation}\label{suppestimate}
\supp\nu\subset K_{C+\epsilon}.
\end{equation}
\end{lemma*}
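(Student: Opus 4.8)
The plan is to run, in the spirit of Lemma~\ref{geom1}, a convex-integration iteration in which each step removes part of the ``renormalization defect'' of $U$ by a rank-$2$ lamination, the enlargement from $K_C$ to $K_{C+\epsilon}$ being spent exactly to create the little room these laminations need. First I would record the structure of the constraint set. Writing $\xi_s:=(s,1,\beta(s))\in\Rbb^3$ and letting $\xi\otimes v$ denote the matrix with rows $\xi_1 v,\xi_2 v,\xi_3 v$, one has from \eqref{nonlinear} that $K_C=\{\xi_s\otimes v:\ s\in[1/C,C],\ 1/C\le|v|\le C\}$; in particular every matrix of $K_C$ has rank one. The decisive point---and the only place where $d\ge 3$ enters---is that in $\Rbb^{3\times 3}$ the difference of any two rank-one matrices automatically has rank $\le 2$, so that any two points of $K_C$ are joined by a rank-$2$ segment and $K_C^{2lc}$ is correspondingly large; in two dimensions such a difference need not lie in the wave cone, which is the rigidity mentioned in the introduction.

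Given $U=(m,v,w)$ with $\dist(U,K_C^{2lc})<\delta$, I would, away from the locus $v\approx 0$ (where one argues directly as in Lemma~\ref{geom1}), set $a:=(m\cdot v)/|v|^2$ and write $m=av+m^\perp$, $w=bv+w^\perp$ with $m^\perp,w^\perp\perp v$ and $c:=b-\beta(a)$; then $\dist(U,K_C)$ is comparable, with constants depending only on $\tilde C$ and $\beta$, to $|m^\perp|+|w^\perp|+|v|\,|c|$ plus a penalty for $a$ leaving $[1/C,C]$. The number $c$ is the ``convexity defect''; it is removed by a rank-one split along the fibre over $v$, i.e.\ by writing $U$ as a convex combination of two matrices $\xi_{s_i}\otimes v+(m^\perp,0,w^\perp)$ with the weighted averages of $(s_i,\beta(s_i))$ equal to $(a,b)$. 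Exactly as in \eqref{convexestimate}, strong convexity of $\beta$---in the form \eqref{stronglyconvex} when $c$ has the favourable sign and of Proposition~\ref{strong} otherwise---shows that such $s_i$ exist and differ from $a$ by at most a constant times $\sqrt{|c|}$, the constant depending only on $\kappa$ and on an upper bound for $\beta''$ on $[1/\tilde C,\tilde C]$. The two resulting pieces then have vanishing convexity defect and lie within $O(|m^\perp|+|w^\perp|)$ of $K_C$, their scalar parameters having drifted off $[1/C,C]$ by only $O(\sqrt{|c|})$, and the spread contributed is $O(\sqrt{|c|}\,|v|)$; the square-root loss is harmless because $h$ need only be a modulus of continuity.

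It remains to realise a matrix $\xi_s\otimes v+(m^\perp,0,w^\perp)$ with $m^\perp,w^\perp\perp v$ small as a finite rank-$2$ laminate supported in $K_{C+\epsilon}$ with spread $O(|m^\perp|+|w^\perp|)$, and, by the same mechanism, to absorb the $O(\delta)$ discrepancy coming from $U$ being only $\delta$-close to, not inside, $K_C^{2lc}$. Both reduce to the geometric statement that $K_{C+\epsilon}^{2lc}$ contains a neighbourhood of $K_C^{2lc}$ of radius $\delta=\delta(\tilde C,\beta,\epsilon)>0$ uniform in $1<C<\tilde C-\epsilon$; granting it, one builds the laminate by solving, near the degenerate configuration in which all directions equal $v$ and all scalar parameters equal $s$, the expectation equations $\sum_i\mu_i u_i=v$, $\sum_i\mu_i s_i u_i=sv+m^\perp$, $\sum_i\mu_i\beta(s_i)u_i=\beta(s)v+w^\perp$ with $s_i\in[1/(C+\epsilon),C+\epsilon]$, three-dimensionality ensuring that the differences that occur have rank $\le 2$ and that the $H_n$-hierarchy of Definition~\ref{rk2laminate} can be respected by a careful choice of the auxiliary matrices. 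Splicing the laminates from the two steps into one rank-$2$ laminate (substituting a laminate for an atom preserves the $H_n$-property once expectations agree) and iterating, with the auxiliary constants drifting geometrically through the window $(C,C+\epsilon)$, the defects decrease to zero and the accumulated spread is bounded by a fixed continuous $h$ of $\dist(U,K_C)$ with $h(0)=0$, depending only on $\tilde C$ and $\beta$---made strictly increasing by adding a linear term, and defined for large values of its argument by falling back on an arbitrary defining laminate of $U$ in $K_{C+\epsilon}$, whose spread is at most $\diam(K_{C+\epsilon}^{2lc})$. Uniformity of $\delta$ and $h$ in $C$ follows since all these estimates are continuous in $C$ over the compact set $[1,\tilde C-\epsilon]$.

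I expect the main obstacle to be the transverse step together with the neighbourhood claim for $K_{C+\epsilon}^{2lc}$: because $K_C^{2lc}$ is genuinely smaller than $\co(K_C)$, one cannot create the transverse defect (nor enter the hull) by an arbitrary finite convex combination, so the geometry of $K_C^{2lc}$ must be understood precisely enough to exhibit laminates obeying the rigid $H_n$-hierarchy. This is exactly where the rank-$2$ rather than rank-$1$ structure available in three dimensions is indispensable, and where the specific features of the problem---the shape of the curve $s\mapsto(s,1,\beta(s))$ and the strong convexity of $\beta$---must be brought to bear.
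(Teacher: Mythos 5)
Your proposal correctly identifies the key structural facts --- that $K_C$ consists of rank-one matrices, so that any two of its points are automatically rank-$2$ connected, and that strong convexity of $\beta$ supplies the scalar degree of freedom --- and the overall plan of spending the $K_C\to K_{C+\epsilon}$ enlargement to create laminate room is the right one. But there are two genuine gaps, and both concern precisely the concrete decomposition devices that constitute the actual content of the lemma.

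First, your ``longitudinal'' ansatz --- writing $U$ as a convex combination of matrices $\xi_{s_i}\otimes v+(m^\perp,0,w^\perp)$ with $\sum_i\mu_is_i=a$ and $\sum_i\mu_i\beta(s_i)=b$ --- cannot be satisfied when the defect $c=b-\beta(a)$ is negative: by Jensen's inequality a convex average of points $(s,\beta(s))$ always lies on or above the graph of $\beta$, so this ansatz forces $b\ge\beta(a)$. Proposition~\ref{strong}, which you invoke for the unfavourable sign, requires a negative coefficient, and negative weights are inadmissible in a laminate; to exploit it one must instead rescale the $v$-row, as in Step~1 of the paper's proof, where the choice $\tau_2=-1$ (so $v_2=-v$) converts the sign problem from the weight to the vector while keeping the laminate weight positive. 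Second, you flag the transverse step as ``the main obstacle'' without resolving it, and the ansatz you sketch degenerates at the base point: writing $u_i=a_iv+e_i$ with $e_i\perp v$, the $v$-components of your three expectation equations give $\sum\mu_ia_i=1$, $\sum\mu_is_ia_i=s$, $\sum\mu_i\beta(s_i)a_i=\beta(s)$; if all $a_i>0$, strict convexity forces $s_i\equiv s$, whereupon $\sum\mu_ie_i=0$ makes the transverse components vanish identically, contradicting $m^\perp,w^\perp\neq0$. The paper avoids this by reversing the order: its Step~3 splits with $w_1=w_2=w$ so that the difference matrix has a zero row (hence rank $\le2$ trivially), its Step~2 keeps all rows of the difference in the plane spanned by $v$ and $w$, and only then, on the collinear fibre, does it invoke strong convexity. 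These two concrete tricks --- the negative $\tau$-scaling for the convexity defect and the constant-row rank-$2$ splits for the transverse defect --- are what make the pieces fit together into an $H_n$-hierarchy, and they are missing from your proposal.
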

\begin{proof}
As usual we denote by $(m,v,w)$ the rows of the matrix $U$. We proceed in five steps:

\textbf{Step 1.} Suppose the vectors $(m,v,w)$ are collinear, so that there exist real numbers $\alpha$, $\gamma$ such that $m=\alpha v$ and $w=\gamma v$. Note that if $\delta'$ is sufficiently small, then $\dist(U,K_{C})<\delta'$ implies
\begin{equation}\label{vestimate}
\frac{1}{C+\epsilon}<|v|<C+\epsilon.
\end{equation}
Note that the meaning of ``sufficiently small'' here can be understood to depend only on $\epsilon$ and $\tilde{C}$.
We want to find a decomposition using the ansatz
\begin{equation*}
(m,v,w)=\lambda(m_1,v_1,w_1)+(1-\lambda)(m_2,v_2,w_2),
\end{equation*} 
where $v_1=\tau_1v$ and $v_2=\tau_2v$. Clearly, this defines a rank-2 (even rank-1) decomposition regardless of the values of $\lambda$, $\tau_1$ and $\tau_2$. The requirement that $(m_1,v_1,w_1)$ and $(m_2,v_2,w_2)$ lie in the set $K_{C+\epsilon}$ then leads to the requirement that there exist $\rho_1,\rho_2>0$ such that
\begin{equation}\label{tausystem}
\begin{aligned}
\lambda\tau_1+(1-\lambda)\tau_2&=1\\
\lambda\tau_1\rho_1+(1-\lambda)\tau_2\rho_2&=\alpha\\
\lambda\tau_1\beta(\rho_1)+(1-\lambda)\tau_2\beta(\rho_2)&=\gamma.
\end{aligned}
\end{equation}
If it happens that $\gamma=\beta(\alpha)+\eta$ for some $\eta\geq0$, we set $\tau_1=\tau_2=1$ so that the first equation of~\eqref{tausystem} is automatically satisfied and the other two equations become
\begin{equation}\label{etasystem1}
\begin{aligned}
\lambda\rho_1+(1-\lambda)\rho_2&=\alpha\\
\lambda\beta(\rho_1)+(1-\lambda)\beta(\rho_2)&=\beta(\alpha)+\eta.
\end{aligned}
\end{equation} 
By the first of these equations and the strong convexity of $\beta$, we have
\begin{equation*}
\lambda\beta(\rho_1)+(1-\lambda)\beta(\rho_2)\geq\beta(\alpha)+\kappa\lambda(1-\lambda)|\rho_1-\rho_2|^2.
\end{equation*}
Therefore, it is possible to find functions $\lambda(\eta)$, $\rho_1(\eta)$, and $\rho_2(\eta)$, depending on $\beta$ and $\alpha$, that are continuous in $\eta$ and satisfy $\lambda(0)=1$, $\rho_1(0)=\rho_2(0)=\alpha$ such that~\eqref{etasystem1} is satisfied for every $\eta\geq0$. Since, if $\dist(U,K_{C})<\delta'$, we can make $\eta$ arbitrarily small by choosing $\delta'$ sufficiently small (depending only on $\tilde{C}$, $\beta$, and $\eta$), we can ensure 
\begin{equation*}
\frac{1}{C+\epsilon}<\rho_1,\rho_2<C+\epsilon
\end{equation*}
for $\delta'$ small enough. Together with~\eqref{vestimate} we conclude that 
\begin{equation*}
(m_1,v_1,w_1),(m_2,v_2,w_2)\in K_{C+\epsilon}.
\end{equation*} 
Thus we have established~\eqref{expectation2} and~\eqref{suppestimate}. 

Next, suppose $\gamma=\beta(\alpha)-\eta$ for some $\eta>0$. Then, in~\eqref{tausystem} we choose $\tau_1=(2-\lambda)/\lambda$ and $\tau_2=-1$ to eliminate the first equation and arrive at  
\begin{equation}\label{etasystem2}
\begin{aligned}
\lambda\tau_1\rho_1-(1-\lambda)\rho_2&=\alpha\\
\lambda\tau_1\beta(\rho_1)-(1-\lambda)\beta(\rho_2)&=\beta(\alpha)-\eta.
\end{aligned}
\end{equation} 
Then, by Proposition~\ref{strong} (replacing $\lambda$ by $-(1-\lambda)$, $x_1$ by $\rho_2$ and $x_2$ by $\rho_1$ and keeping in mind $\lambda\tau_1-(1-\lambda)=1$), we have
\begin{equation*}
\lambda\tau_1\beta(\rho_1)-(1-\lambda)\beta(\rho_2)\leq\beta(\alpha)-\kappa(2-\lambda)(1-\lambda)|\rho_1-\rho_2|^2.
\end{equation*}
Assertions~\eqref{expectation2} and~\eqref{suppestimate} then follow by the same arguments as above, observing that again $\eta=0$ corresponds to $\lambda=1$, $\rho_1=\rho_2=\alpha$. This completes Step 1. 

Notice again that $\delta'>0$ constructed in Step 1 depends on $\epsilon$, $\beta$, and $\tilde{C}$, but not on $U$, or $C$.

\textbf{Step 2.} Suppose now that $m$ and $v$ are parallel, that is, there exists a real number $\alpha$ such that $m=\alpha v$. (We are no longer assuming that $w$ be parallel with $m$ and $v$.) Again, we wish to represent $(m,v,w)$ as a rank-2 combination of two triplets,
\begin{equation*}
(m,v,w)=\lambda(m_1,v_1,w_1)+(1-\lambda)(m_2,v_2,w_2),
\end{equation*}
where $m_i$, $v_i$, $w_i$ are collinear ($i=1,2$), so that we can proceed as in Step 1. To this end, take the ansatz $m_i=\alpha v_i$, $w_i=\mu_iv_i$, and set $\lambda=1/2$:
\begin{equation}\label{musystem}
\begin{aligned}
v_1+v_2&=2v\\
\alpha v_1+\alpha v_2&=2\alpha v\\
\mu_1v_1+\mu_2v_2&=2w.
\end{aligned}
\end{equation}
First, clearly $(m_2,v_2,w_2)-(m_1,v_1,w_1)$ has rank at most 2 with this ansatz. Secondly, if $v_1$ and $v_2$ are chosen linearly independent and in the plane spanned by $v$ and $w$, then they form a basis of this subspace and therefore~\eqref{musystem} can be solved (if $w$ and $v$ are already parallel, it can be trivially solved). More specifically, if $\eta>0$, then by choosing $\delta''>0$ small enough (depending only on $\tilde{C}$, $\beta$, and $\eta$) we can ensure that $\dist(U,K_{C})<\delta''$ implies $|w-\beta(\alpha)v|<\eta$. When $w=\beta(\alpha)v$ exactly, we can simply set $v_1=v_2=v$ and $\mu_1=\mu_2=\beta(\alpha)$. Therefore, there exist continuous maps $v_i(w)$, $\mu_i(w)$ ($i=1,2$) depending on $\alpha$, $\beta$ such that $v_i(\beta(\alpha)v)=v$ and $\mu_i(\beta(\alpha)v)=\beta(\alpha)$ and such that~\eqref{musystem} is satisfied for any $w$. It follows that, by choosing $\delta''>0$ sufficiently small, $\dist(U,K_{C})<\delta''$ guarantees 
\begin{equation*}
\dist((m_i,v_i,w_i),K_{C})<\delta'\hspace{0.2cm}(i=1,2)
\end{equation*}  
for the number $\delta'$ established in Step 1. We may therefore decompose each $(m_i,v_i,w_i)$ further as in Step 1, which yields a rank-2 decomposition of $(m,v,w)$ into (at most) four triplets in $K_{C+\epsilon}$, each satisfying~\eqref{expectation2},~\eqref{L1estimate2}. Note again that $\delta''$ depends only on $\tilde{C}$, $\beta$, and $\epsilon$.

\textbf{Step 3.} Consider now a general triplet $(m,v,w)$. We want to decompose $(m,v,w)$ into two triplets along rank-2 lines,  
\begin{equation*}
(m,v,w)=\lambda(m_1,v_1,w_1)+(1-\lambda)(m_2,v_2,w_2),
\end{equation*}
such that there exist $\alpha_1,\alpha_2$ such that $m_1=\alpha_1v_1$, $m_2=\alpha_2v_2$, so that Step 2 can be applied to both $(m_i,v_i,w_i)$ individually. We take the ansatz $\lambda=1/2$, $w_1=w_2=w$ (thereby ensuring our decomposition runs along a rank-2 line), to obtain the equations
\begin{equation*}
\begin{aligned}
v_1+v_2&=2v\\
\alpha_1v_1+\alpha_2v_2&=2m.
\end{aligned}
\end{equation*} 
The exact same reasoning as in Step 2 then yields a $\delta>0$ depending only on $\tilde{C}$, $\beta$, and $\epsilon$ such that $\dist((m,v,w),K_{C})<\delta$ ensures that
\begin{equation*}
\dist((m_i,v_i,w_i),K_{C})<\delta''\hspace{0.2cm}(i=1,2),
\end{equation*} 
where $\delta''$ is the number from Step 2.

\textbf{Step 4.} So far we have produced $\delta>0$ such that the assertions of Lemma~\ref{geom2} are true provided $\dist(U,K_{C})<\delta$. Let now $U$ be such that only
\begin{equation*}
\dist(U,K^{2lc}_{C})<\delta.
\end{equation*}
By assumption and the definition of the rank-2 lamination convex hull (Definition~\ref{2lc}), $U$ can be written as
\begin{equation*}
U=\sum_{i=1}^n\lambda_iU_i+\tilde{U}=\sum_{i=1}^n\lambda_i(U_i+\tilde{U}),
\end{equation*}
where $|\tilde{U}|<\delta$, the family $(\lambda_i,U_i)$ satisfies the $H_n$-condition, and $U_i\in K_{C}$ ($i=1\ldots n$). But for every $i$, we can now apply Steps 1--3 to $U_i+\tilde{U}$, which completes the proof of Lemma~\ref{geom2} modulo the estimate~\eqref{L1estimate2}.

\textbf{Step 5.} It remains to exhibit a function $h$ that renders~\eqref{L1estimate2} correct. To this end, recall that the $\lambda_i$ and $U_i$ which we constructed in the previous steps depended solely on $U$ and $\beta$, so that in particular the left hand side of~\eqref{L1estimate2} is independent of $C$. Moreover, if $U\in K_{\tilde{C}}$, our construction leaves $U$ unchanged, so that the left hand side of~\eqref{L1estimate2}, considered as a function of $U$ (with $\beta$ fixed), is zero on $K_{\tilde{C}}$. The last observation needed is that, by construction, the left hand side $\sum_{i=1}^n\lambda_i|U_i-U|$ depends on $U$ continuously in a $\delta$-neighborhood of $K_{\tilde{C}}^{2lc}$.

The distance function $\dist(U,K_{\tilde{C}})$ is of course zero on $K_{\tilde{C}}$ and positive elsewhere (since $K_{\tilde{C}}$ is compact). Therefore, we may define
\begin{equation*}
h(t)=\max_{\mathcal{U}_t}\left\{\sum_{i=1}^n\lambda_i|U_i-U|\right\},
\end{equation*} 
where we set $\mathcal{U}_t=\{U\in\Rbb^{3\times3}:\dist(U,K_{\tilde{C}})=t\}$. Again we considered the left hand side of~\eqref{L1estimate2} as a continuous function of $U$. We may further assume $h$ to be strictly increasing by choosing it larger if necessary.

Then, by definition of $h$ we have
\begin{equation*}
\sum_{i=1}^n\lambda_i|U_i-U|\leq h\left(\dist(U,K_{\tilde{C}})\right)\leq h\left(\dist(U,K_{C})\right)
\end{equation*}
for any $C\leq\tilde{C}$, since then $K_{{C},}\subset K_{\tilde{C}}$. The proof is thus complete. 
\end{proof}

\bibliography{defect}
\bibliographystyle{amsalpha}

%\bibitem{ADM}L. Ambrosio, C. De Lellis, J. Mal\'y:
%{\em On the chain rule for the divergence of BV like vector fields: applications, partial results, open problems}
%Perspective in nonlinear partial differential equations p. 31-67, 2007

%\bibitem{BG} S. Bianchini, N. A. Gusev, {\em Steady nearly incompressible vector fields in 2D: chain rule and renormalization} arXiv:1408.2932

%\bibitem{A} M. Aizenman,{\em On vector fields as generators of flows: A counterexample to Nelson?s conjecture}, Ann. Math. (2), 107 (1978), pp. 287?296.

%\bibitem{D}N. Depauw, {\em Non-unicit\'e du transport par un champ de vecteurs presque BV, S\'eminaire:
%\'Equations aux D\'eriv\'ees Partielles, Exp. No. XIX, \'Ecole Polytech., Palaiseau, 2003.

%\bibitem{CL}F. Colombini, N. Lerner,{\em Sur les champs de vecteurs peu r\'eguliers, S\'eminaire: \'Equations
%aux D\'eriv\'ees Partielles, Exp. No. XIV, \'Ecole Polytech., Palaiseau, 2001.

%\bibitem{ABC1} G. Alberti, S. Bianchini, G. Crippa, {\em A uniqueness result for the continuity equation in two dimensions}. Journal of the European Mathematical Society 16 (2014), Issue 2, 201?234.

%\bibitem{ABC2}  G. Alberti, S. Bianchini, G. Crippa,{\em Structure of level sets and Sard-type properties of Lipschitz maps: results and counterexamples}. Annali della Scuola Normale Superiore di Pisa Classe di Scienze (5) XII (2013), 863?902.

\end{document}